\def\noprint#1{}
\newtheorem{assumption}{Assumption}
\newcommand{\lambdamin}{\lambda_{\mbox{\rm\scriptsize{min}}}}
\newcommand{\R}{\mathbb{R}}
\newcommand{\eps}{\epsilon}
\newcommand{\epsg}{\epsilon_g}
\newcommand{\epsH}{\epsilon_H}
\newcommand{\flow}{f_{\mbox{\rm\scriptsize low}}}
\newcommand{\jin}{j_{inr}}
\newcommand{\cin}{c_{in}}
\newcommand{\jir}{j_{inr}}
\newcommand{\cir}{c_{ir}}
\newcommand{\refer}[1]{\textcolor{black}{#1}}
\def\tto{\;{\lower 1pt \hbox{$\rightarrow$}}\kern -10pt
           \hbox{\raise 2.8pt \hbox{$\rightarrow$}}\;}
\title{Complexity Analysis of Second-Order Line-Search Algorithms for Smooth Nonconvex Optimization%
\thanks{Version of \today. \funding{Work of the first author was
    supported by Subcontract 3F-30222 from Argonne National
    Laboratory. Work of the second author was supported by NSF Awards
    IIS-1447449, 1628384, 1634597, and 1740707; 
AFOSR Award FA9550-13-1-0138; and Subcontract
    3F-30222 from Argonne National Laboratory.  Part of this work was done
      while the second author was visiting the Simons Institute for
      the Theory of Computing, and partially supported by the
      DIMACS/Simons Collaboration on Bridging Continuous and Discrete
      Optimization through NSF Award CCF-1740425.}}}
\author{Cl\'ement W. Royer\thanks{Wisconsin Institute of Discovery,
    University of Wisconsin, 330 N. Orchard St., Madison, WI
    53715. (\email{croyer2@wisc.edu}).} \and Stephen
  J. Wright\thanks{Computer Sciences Department, University of
    Wisconsin, 1210 W. Dayton St., Madison, WI
    53706. (\email{swright@cs.wisc.edu}).}}
\begin{document}

\maketitle

\begin{abstract}
There has been much recent interest in finding unconstrained local
minima of smooth functions, due in part of the prevalence of such
problems in machine learning and robust statistics. A particular
focus is  algorithms with good complexity
guarantees. Second-order Newton-type methods that make use of
regularization and trust regions have been analyzed from such a
perspective. More recent proposals, based chiefly on first-order
methodology, have also been shown to enjoy optimal iteration
complexity rates, while providing additional guarantees on
computational cost.

In this paper, we present an algorithm with favorable complexity
properties that differs in two significant ways from other recently
proposed methods. First, it is based on line searches only: Each step
involves computation of a search direction, followed by a backtracking
line search along that direction. Second, its analysis is rather
straightforward, relying for the most part on the standard technique
for demonstrating sufficient decrease in the objective from
backtracking.
In the latter part of the paper, we consider inexact computation of
the search directions, using iterative methods in linear algebra: the
conjugate gradient and Lanczos methods. We derive modified convergence
and complexity results for these more practical methods.
\end{abstract}

\begin{keywords}
smooth nonconvex unconstrained optimization, line-search methods, second-order methods,
second-order necessary conditions, iteration complexity.
\end{keywords}

\begin{AMS}
49M05, 49M15, 90C06, 90C60.
\end{AMS}

\section{Introduction} \label{sec:intro}

We consider the unconstrained optimization problem
\begin{equation} \label{eq:f}
\min \, f(x),
\end{equation}
where $f:\R^n \to \R$ is a twice Lipschitz continuously differentiable
function that is generally nonconvex.
Some algorithms for this problem seek points that nearly satisfy the
second-order necessary conditions for optimality, which are that 
$\nabla f(x^*)=0$ and $\nabla^2 f(x^*) \succeq 0$.
These iterative schemes terminate at an iterate $x_k$ for which
\begin{equation} \label{eq:eps2opt}
\left\| \nabla f(x_k)\right\| \le \epsilon_g \quad \mathrm{and} \quad
\lambda_{\min}(\nabla^2 f(x_k)) \ge -\epsilon_H,
\end{equation}
where $\epsilon_g,\epsilon_H \in (0,1)$ are (typically small)
prescribed tolerances. Numerous algorithms have been proposed in
recent years for finding points that satisfy \eqref{eq:eps2opt}, each with
a complexity guarantee, which is an upper bound on an index $k$ that
satisfies \eqref{eq:eps2opt}, in terms of $\epsg$, $\epsH$, and other
quantities. We summarize below the main results.

Classical second-order convergent trust-region
schemes~\cite{ARConn_NIMGould_PhLToint_2000} can be shown to satisfy
\eqref{eq:eps2opt} after at most $\mathcal{O}\left(
\max\left\{\epsilon_g^{-2}\,\epsilon_H^{-1},
\epsilon_H^{-3}\right\}\right)$
iterations~\cite{CCartis_NIMGould_PhLToint_2012a}. Cubic
regularization methods in their basic
form~\cite{CCartis_NIMGould_PhLToint_2011a} have better complexity bounds than  trust-region
schemes, requiring at most
$\mathcal{O}\left(\max\left\{\epsilon_g^{-2},\epsilon_H^{-3}\right\}\right)$
iterations. The difference can be explained by the restriction 
enforced by the trust-region constraint on the norm of the steps. 
Recent work has shown that it is possible to improve the
bound for trust-region algorithms using specific definitions of the
trust-region radius~\cite{SGratton_CWRoyer_LNVicente_2017}. The best
known iteration bound for a second-order algorithm (that is, an
algorithm relying on the use of second-order derivatives and
Newton-type steps) is $\mathcal{O}\left(\max\left\{\epsilon_g^{-3/2},
\epsilon_H^{-3}\right\}\right)$. This bound was established originally
(under the form of a global convergence rate) 
in~\cite{YuNesterov_BTPolyak_2006}, by considering cubic
regularization of Newton's method. The same result is achieved by the
adaptive cubic regularization framework under suitable assumptions on
the computed step~\cite{CCartis_NIMGould_PhLToint_2012a}. Recent
 proposals have shown that the same bound can be attained
by algorithms other than cubic regularization. A modified trust-region
method~\cite{FECurtis_DPRobinson_MSamadi_2017a}, a variable-norm
trust-region scheme~\cite{JMMartinez_MRaydan_2017}, and a quadratic
regularization algorithm with cubic descent
condition~\cite{EGBirgin_JMMartinez_2017} all achieve the same bound.

When $\epsilon_g=\epsilon_H=\epsilon$ for some $\epsilon \in (0,1)$, all the 
bounds mentioned above reduce to $\mathcal{O}(\epsilon^{-3})$. It has been 
established that this order is sharp for the class of second-order
methods~\cite{CCartis_NIMGould_PhLToint_2012a}, and it can be proved
for a wide range of algorithms that make use of second-order derivative
information; see~\cite{GNGrapiglia_JYuan_YXYuan_2016b}. Setting
$\epsilon_H=\epsilon^{1/2}$ and $\epsilon_g=\epsilon$ for some
$\epsilon>0$ yields bounds varying between
$\mathcal{O}(\epsilon^{-3})$ and $\mathcal{O}(\epsilon^{-3/2})$, the
latter being again optimal within the class of second-order
algorithms~\cite{CCartis_NIMGould_PhLToint_2011c}.

A new trend in complexity analyses has emerged recently, that focuses
on measuring not just the number of iterations to achieve
\eqref{eq:eps2opt} but also the computational cost of the iterations.
Two independent proposals, respectively based on adapting accelerated
gradient to the nonconvex
setting~\cite{YCarmon_JCDuchi_OHinder_ASidford_2017} and approximately
solving the cubic
subproblem~\cite{NAgarwal_ZAllenZhu_BBullins_EHazan_TMa_2017}, require
$\mathcal{O}\left(\log\left(\frac{1}{\epsilon}\right)
\epsilon^{-7/4}\right)$ operations (with high probability, showing only 
dependency on $\epsilon$) to find a
point $x_k$ that satisfies
\begin{equation} \label{eq:semi2ndopt}
\left\| \nabla f(x_k)\right\| \le \epsilon \quad \mathrm{and} \quad
\lambda_{\min}(\nabla^2 f(x_k)) \ge -\sqrt{L_H\epsilon},
\end{equation}
with $L_H$ being a Lipschitz constant of the Hessian. 
The difference factor of $\epsilon^{-1/4}$ by comparison with the
complexities of the previous paragraph is due to the cost of computing
a negative eigenvalue of $\nabla^2 f(x_k)$ and/or the cost of solving
the linear system. A later proposal~\cite{YCarmon_JCDuchi_2017}
focuses on solving cubic subproblems via gradient descent, together
with an inexact eigenvalue computation: It
satisfies~\eqref{eq:semi2ndopt} in at most
$\mathcal{O}\left(\log\left(\frac{1}{\epsilon}\right)\epsilon^{-2}\right)$
with high probability. Another technique
\cite{CJin_RGe_PNetrapalli_SMKakade_MIJordan_2017} requires only
gradient computations, with noise being added to some iterates.  It
reaches with high probability a point satisfying~\eqref{eq:semi2ndopt}
in at most
$\mathcal{O}\left(\log^4\left(\frac{1}{\epsilon}\right)\epsilon^{-2}\right)$
iterations. Up to the logarithmic factor, this bound is characteristic
of gradient-type methods, but classical work establishes only
first-order guarantees~\cite{CCartis_NIMGould_PhLToint_2010}.
Although this setting is not explicitly addressed in the cited papers,
it appears that to reach an iterate satisfying~\eqref{eq:eps2opt} with
$\epsilon_g=\epsilon_H=\epsilon$, the methods studied
in~\cite{NAgarwal_ZAllenZhu_BBullins_EHazan_TMa_2017,
YCarmon_JCDuchi_OHinder_ASidford_2017} would require
$\mathcal{O}\left(\log\left(\frac{1}{\epsilon}\right)
\epsilon^{-7/2}\right)$ iterations, while the methods described
in~\cite{YCarmon_JCDuchi_2017}
and~\cite{CJin_RGe_PNetrapalli_SMKakade_MIJordan_2017} could require
$\mathcal{O}\left(\log\left(\frac{1}{\epsilon}\right)\epsilon^{-3}\right)$
and
$\mathcal{O}\left(\log^4\left(\frac{1}{\epsilon}\right)\epsilon^{-3}\right)$
iterations, respectively. Although these bounds look worse than those
of classical nonlinear optimization schemes, they are more
informative, in that they not only account for the number of outer
iterations of the algorithm, but also for the cost of performing 
each outer iteration (often measured in terms of the number of inner 
iterations, each of which has similar cost). We note, however, that unlike 
the classical complexity results, the newer procedures make use of
randomization, so the bounds typically hold only with high probability.

Our goal in this paper is to describe an algorithm that achieves
optimal complexity, whether measured by the number of iterations
required to satisfy the condition \eqref{eq:eps2opt} or by an estimate
of the number of fundamental operations required (gradient evaluations
or Hessian-vector multiplications). Each iteration of our algorithm
takes the form of a step calculation followed by a backtracking line
search. (To our knowledge, ours is the first line-search algorithm
that is endowed with a second-order complexity analysis.) The
``reference'' version of our algorithm is presented in
Section~\ref{sec:algoexact}, along with its complexity analysis. In
this version, we assume that two key operations --- solution of the linear
equations to obtain Newton-like steps and calculation of the most
negative eigenvalue of a Hessian --- are performed exactly.  In
Section~\ref{sec:algoinexact}, we refine our study by introducing
inexactness into these operations, and adjusting the complexity bounds
appropriately. Finally, we discuss the established results and their
practical connections in Section~\ref{sec:discuss}.

Throughout the paper, $\| \cdot \|$ denotes the Euclidean norm, unless 
otherwise indicated by a subscript. A vector $v$ will be called a \emph{unit 
vector} if $\|v\|=1$.

\section{A Line-Search Algorithm Based on Exact Step Computations}
\label{sec:algoexact}

We now describe an algorithm based on exact computation of search
directions, in particular, the Newton-like search directions and the
eigenvector that corresponds to the most negative eigenvalue of the
Hessian.

\subsection{Outline}

We use a standard line-search
framework~\cite[Chapter~3]{JNocedal_SJWright_2006}. Starting from an
initial iterate $x_0$, we apply an iterative scheme of the form
$x_{k+1}=x_k+\alpha_k d_k$, where $d_k$ is a chosen search direction
and $\alpha_k$ is a step length computed by a backtracking line-search
procedure.

Algorithm~\ref{algo:sorn} defines our method. Each iteration begins by
evaluating the gradient, together with the curvature of the function
along the gradient direction. This information determines whether the
negative gradient direction is a suitable choice for search direction
$d_k$, and if so, what scaling should be applied to it. If not, we 
compute the minimum eigenvalue of the Hessian. The corresponding eigenvector 
is used as the search direction whenever the
eigenvalue is sufficiently negative. Otherwise, we compute a
Newton-like search direction, adding a regularization term if needed
to ensure sufficient positive definiteness of the coefficient
matrix. There are a total of five possible choices for the search
direction $d_k$ (including two different scalings of the negative
gradient). Table~\ref{tab:stepsalgo} summarizes the various steps that
can be performed and the conditions under which those steps are
chosen.

\begin{table}[ht!]
	\begin{center}
	\begin{tabular}{|l|l|c||ll|}
		\hline
		\multicolumn{3}{|c||}{Context} &Direction &Decrease \\
		\hline
		$\|g_k\| = 0$ &- &$\lambda_k<-\epsH$ 	&$v_k$	
		&Lemma~\ref{lemma:eigsteplength} \\ \hline
		& $R_k < -\epsH$ & & $R_k g_k/\|g_k\|$ &Lemma~\ref{lemma:eigsteplength}\\ 
		\hline
		$\| g_k \| > \epsg$ & $R_k \in [-\epsH,\epsH]$ & & $-g_k/\|g_k\|^{1/2}$ 
		&Lemma~\ref{lemma:gradsteplength}\\ \hline
		$\| g_k \| \le \epsg$ & $R_k \in [-\epsH,\epsH]$ & $\lambda_k<-\epsH$ &$v_k$ 
		& Lemma~\ref{lemma:eigsteplength}\\
		$\| g_k \| \le \epsg$ & $R_k \in [-\epsH,\epsH]$ 
		&$\lambda_k \in [-\epsH,\epsH]$ & $d^r_k$ 
		&Lemma~\ref{lemma:regnewtsteplength}\\ \hline
		$\| g_k \| > \epsg$ & $R_k > \epsH$ & $\lambda_k<-\epsH$ & $v_k$ 
		& Lemma~\ref{lemma:eigsteplength}\\
		$\| g_k \| > \epsg$ & $R_k > \epsH$ & $\lambda_k \in [-\epsH,\epsH]$ 
		& $d^r_k$ & Lemma~\ref{lemma:regnewtsteplength}\\
		$\| g_k \| > \epsg$ & $R_k > \epsH$ & $\lambda_k >\epsH$ & $d^n_k$ 
		& Lemma~\ref{lemma:newtsteplength} \\ \hline
	\end{tabular}
	\end{center}
	\vspace*{0.5ex}
	\label{tab:stepsalgo}
	\caption{Steps and associated decrease lemmas for Algorithm~\ref{algo:sorn}.}
\end{table}

Once a search direction has been selected, a backtracking line search is 
applied with an initial choice of $1$. A sufficient condition related to the 
cube of the step norm must be satisfied; see~\eqref{eq:lsdecrease}. Such a 
condition has been instrumental in the complexity analysis of recently 
proposed Newton-type methods achieving the best known iteration complexity
rates~\cite{EGBirgin_JMMartinez_2017,FECurtis_DPRobinson_MSamadi_2017a}.

\begin{algorithm}[H]
\caption{Second-Order Line Search Method}
\label{algo:sorn}
\begin{algorithmic}
\STATE \emph{Init.} Choose $x^0 \in \R^n$, $\theta \in (0,1)$, $\eta > 0$,
$\epsilon_g \in (0,1)$, $\epsilon_H \in (0,1)$;
\FOR{$k=0,1,2,\dotsc$}
\STATE \textbf{Step 1. (First-Order)} Set
$g_k = \nabla f(x_k)$;
\IF{$\|g_k\|=0$}
\STATE Go to Step 2;
\ENDIF
\STATE Compute $R_k = \frac{g_k^\top \nabla^2 f(x_k) g_k}{\|g_k\|^2}$;
\IF{$R_k<-\epsH$}
\STATE{Set $d_k = \frac{R_k}{\|g_k\|}g_k$ and go to Step LS;}
\ELSIF{$R_k \in [-\epsH,\epsilon_H]$ and  $\|g_k\| > \epsilon_g$}
\STATE{Set $d_k = -\frac{g_k}{\|g_k\|^{1/2}}$ and go to Step LS;}
\ELSE
\STATE{Go to Step 2;}
\ENDIF
\STATE \textbf{Step 2. (Second-Order)} Compute an eigenpair 
$(v_k,\lambda_k) \in \R^n \times \R$ where 
$\lambda_k=\lambda_{\min}(\nabla^2 f(x_k))$  and $v_k$ is such that 
\begin{equation} \label{eq:eigenvec}
	\nabla^2 f(x_k) v_k = \lambda_k v_k,\quad v_k^\top g_k \le 0,
	\quad \|v_k\|=[-\lambda_k]_+;
\end{equation}
\IF{$\| g_k \| \le \epsg$ and $\lambda_k \ge -\epsH$}
\STATE{\textbf{Terminate (or go to Local Phase)};}
\ELSIF{$\lambda_k < -\epsilon_H$}
\STATE{\textbf{(Negative Curvature)} Set $d_k=v_k$;}
\ELSIF{$\lambda_k > \epsilon_H$} 
\STATE \textbf{(Newton)} Set $d_k=d^n_k$, where 
\begin{equation} \label{eq:fullnewton}
	\nabla^2 f(x_k) d^n_k = -g_k;
\end{equation}
\ELSE
\STATE  \textbf{(Regularized Newton)} Set $d_k=d^r_k$, where
\begin{equation} \label{eq:regnewton}
	\left(\nabla^2 f(x_k) + 2 \epsilon_H I \right)d^r_k = -g_k;
\end{equation}
\ENDIF
\STATE Go to Step LS;
\STATE \textbf{Step LS. (Line Search)} Compute a step length 
$\alpha_k=\theta^{j_k}$, where $j_k$ is the smallest nonnegative integer such 
that
\begin{equation} \label{eq:lsdecrease}
	f(x_k + \alpha_k d_k) < f(x_k) - \frac{\eta}{6}\alpha_k^3 \|d_k\|^{3}
\end{equation}
holds, and set $x_{k+1} = x_k+\alpha_k d_k$.
\IF{$d_k=d^n_k$ or $d_k=d^r_k$ and $\| \nabla f(x_{k+1}) \| \le \epsg$}
\STATE{\textbf{Terminate (or  go to Local Phase)}};
\ENDIF
\ENDFOR
\end{algorithmic}
\end{algorithm}

At most one eigenvector computation and one linear system solve are
needed per iteration of Algorithm~\ref{algo:sorn}, along with a
gradient evaluation and the Hessian-vector multiplication required to
calculate $R_k$.

The algorithm contains two tests for termination, with the option of
switching to a ``Local Phase'' instead of terminating at a point that
satisfies approximate second-order conditions.  The Local Phase aims
for rapid local convergence to a point satisfying second-order
necessary conditions for a local solution; it is detailed in
Algorithm~\ref{algo:localphase}. Termination (or switch to the Local
Phase) occurs at an iteration $k$ at which an
\emph{$(\epsilon_g,\epsilon_H)$-approximate second-order critical
  point} is reached, according to the following definition:
	\begin{equation} \label{eq:optwcc}
		\min\left\{ \|g_k\|,\|g_{k+1}\|\right\} \le \epsilon_g, 
		\quad \mathrm{and} \quad \lambdamin (\nabla^2 f(x_k))  \ge -\epsilon_H,
	\end{equation}
where $g_k = \nabla f(x_k)$, etc.  
As we see below, the quantity
$\min\left\{ \|g_k\|,\|g_{k+1}\|\right\}$ arises naturally in the
decrease formula we establish for the steps computed by
Algorithm~\ref{algo:sorn}. In fact, for the methods we reviewed in
introduction, one observes that the decrease formulas obtained for
their steps either involve only
$\|g_k\|$~\cite{NAgarwal_ZAllenZhu_BBullins_EHazan_TMa_2017,
  YCarmon_JCDuchi_2017, YCarmon_JCDuchi_OHinder_ASidford_2017,
  CJin_RGe_PNetrapalli_SMKakade_MIJordan_2017,YuNesterov_BTPolyak_2006},
only $\|g_{k+1}\|$~\cite{EGBirgin_JMMartinez_2017,
  FECurtis_DPRobinson_MSamadi_2017a,JMMartinez_MRaydan_2017}, or the
minimum of the two
quantities~\cite{CCartis_NIMGould_PhLToint_2011b}.  The later case
appears due to the presence of both gradient-type (see
Lemma~\ref{lemma:gradsteplength}) and Newton-type steps (see
Lemmas~\ref{lemma:newtsteplength} and~\ref{lemma:regnewtsteplength}).

\begin{algorithm}[H]
\caption{Local Phase}
\label{algo:localphase}
\begin{algorithmic}
\LOOP
\STATE{Set $g_k=\nabla f(x_k)$;}
\IF{$\| g_k \| > \epsg$}
\STATE{Return to Algorithm~\ref{algo:sorn};}
\ENDIF
\STATE{Compute $\lambda_k$ and $v_k$ as in \eqref{eq:eigenvec};}
\IF{$\lambda_k<-\epsH$}
\STATE{Return to Algorithm~\ref{algo:sorn};}
\ELSIF{$\lambda_k \in (-\epsH,0]$}
\STATE{Set $d_k=d^r_k$ from \eqref{eq:regnewton};}
\ELSE
\STATE{Set $d_k=d^n_k$ from \eqref{eq:fullnewton};}
\ENDIF
\STATE{Perform backtracking line search as in Step LS of 
Algorithm~\ref{algo:sorn} to obtain $x_{k+1}$;}
\STATE{$k \leftarrow k+1$;}
\ENDLOOP
\end{algorithmic}
\end{algorithm}

The main convergence results of this section are complexity results on
the number of iterations or function evaluations required to satisfy
condition~\eqref{eq:optwcc} {\em for the first time}. 
(Algorithm~\ref{algo:localphase} makes provision for
re-entering the main algorithm, if the approximate second-order
conditions are violated at any point. This re-entry feature is not covered
by our complexity analysis.)

\subsection{Iteration Complexity}
\label{subsec:wccexactits}

We now establish a complexity bound for Algorithm~\ref{algo:sorn}, in
the form of the maximum number of iterations that may occur before the
Termination conditions are satisfied for the first time.  To this end,
we provide guarantees on the decrease that can be obtained for each of
the possible choices of search direction. 

In the rest of this paper, we make the following assumptions.

\begin{assumption} \label{assum:compactlevelset}
	The level set $\mathcal{L}_f(x_0) = \{x | f(x) \le f(x_0)\}$ is 
	a compact set.
\end{assumption}

\begin{assumption} \label{assum:fC22}
	The function $f$ is twice Lipschitz continuously differentiable on an
	open neighborhood of $\mathcal{L}_f(x_0)$, and we denote by $L_g$ and $L_H$ 
	the respective Lipschitz constants for $\nabla f$ and $\nabla^2 f$ on this 
	set.
\end{assumption}

By the continuity of $f$ and its derivatives,
Assumption~\ref{assum:compactlevelset} implies that there exist
$\flow \in \R$, $U_g > 0$ and $U_H > 0$ such that for every 
\refer{$x \in \mathcal{L}_f(x_0)$}, one has
\begin{equation}\label{eq:boundscompact}
	f(x) \geq \flow,\quad \|\nabla f(x)\| \leq U_g,
	\quad \|\nabla^2 f(x)\| \leq U_H.
\end{equation}
We point out that the choice $U_H=L_g$ is a valid one for theoretical
purposes. However, $U_H$ will serve as an explicit parameter of our
inexact method in Section~\ref{sec:algoinexact}, so we 
use separate notation, to allow $U_H$ to be an overestimate of $L_g$.

An immediate consequence of these assumptions is that for any $x$ and
$d$ such that Assumption~\ref{assum:fC22} is satisfied at $x$ and
$x+d$, we have
\begin{equation} \label{eq:LH}
f(x+d) \le f(x) + \nabla f(x)^Td + \frac12 d^T \nabla^2 f(x) d + 
\frac{L_H}{6} \| d\|^3.
\end{equation}

The following four technical lemmas derive bounds on the decrease
obtained from each type of step. The proofs are rather similar 
to each other, and follow the usual template for backtracking line-search 
methods.

We begin with negative curvature directions, showing that our choices
for initial scaling yield a decrease proportional to the cube of the
(negative) curvature in that direction.

\begin{lemma} \label{lemma:eigsteplength}
Under Assumption~\ref{assum:fC22}, suppose that the search direction
for the $k$-th iteration of Algorithm~\ref{algo:sorn} is chosen either
as $d_k = \frac{R_k}{\|g_k\|}g_k$ with $R_k < - \epsilon_H$ in Step 1
or $d_k = v_k$ in Step 2. Then the backtracking line search terminates
with step length $\alpha_k = \theta^{j_k}$ with $j_k \le
  j_e+1$, where
\begin{equation} \label{eq:eiglsits}	
		j_e := \left[ \log_{\theta}\left( \frac{3}{L_H+\eta}
                  \right) \right]_+,
\end{equation}
and the decrease in the function value resulting from the chosen
step length satisfies
\begin{equation} \label{eq:eigsteplength}
		f(x_k) - f(x_k+\alpha_k\,d_k) \; \geq \; c_e 
		\left[\frac{|d_k^\top \nabla^2 f(x_k) d_k|}{\|d_k\|^2}\right]^3,
\end{equation}	
with
\begin{equation*}
		c_e := \frac{\eta}{6}
                \min\left\{1,\frac{27\theta^3}{(L_H+\eta)^3}\right\}.
\end{equation*}
\end{lemma}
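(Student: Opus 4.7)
The plan is to unify the two sub-cases and then run a standard backtracking analysis based on the Lipschitz Hessian Taylor bound \eqref{eq:LH}.

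First, I would rewrite both candidate directions in a common form. For the Step~1 direction $d_k=(R_k/\|g_k\|)g_k$ with $R_k<-\epsilon_H$, a direct calculation gives $\|d_k\|=|R_k|$, $d_k^\top\nabla^2 f(x_k)d_k=R_k^3<0$, and $g_k^\top d_k = R_k\|g_k\|\le 0$. For the Step~2 direction $d_k=v_k$ with $\lambda_k<-\epsilon_H$, the properties \eqref{eq:eigenvec} give $\|v_k\|=-\lambda_k$, $v_k^\top\nabla^2 f(x_k)v_k=\lambda_k^3<0$, and $g_k^\top v_k\le 0$. Introducing $\mu_k:=|d_k^\top \nabla^2 f(x_k)d_k|/\|d_k\|^2$, both cases yield the unified identities $\|d_k\|=\mu_k$, $d_k^\top\nabla^2 f(x_k)d_k=-\mu_k^3$, and $g_k^\top d_k\le 0$. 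This unification is the minor conceptual step that lets the same line-search argument cover both directions.

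Next I would plug an arbitrary trial step $\alpha d_k$ into the Lipschitz bound \eqref{eq:LH} and use the identities above. This yields
\begin{equation*}
f(x_k+\alpha d_k)\le f(x_k)-\tfrac{\alpha^2}{2}\mu_k^3+\tfrac{L_H}{6}\alpha^3\mu_k^3.
\end{equation*}
Comparing with the line-search requirement \eqref{eq:lsdecrease}, we need
\begin{equation*}
-\tfrac{\alpha^2}{2}\mu_k^3+\tfrac{L_H}{6}\alpha^3\mu_k^3\le -\tfrac{\eta}{6}\alpha^3\mu_k^3,
\end{equation*}
which (after dividing by $\alpha^2\mu_k^3>0$) is equivalent to $\alpha\le 3/(L_H+\eta)$. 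Hence the line-search test is certainly satisfied as soon as $\alpha$ falls below this threshold.

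Finally I would translate this $\alpha$-threshold into a bound on the backtracking index. The smallest integer $j$ with $\theta^j\le 3/(L_H+\eta)$ is at most $j_e$ when $3/(L_H+\eta)<1$ and equals $0$ otherwise, so the first $j$ that makes the inequality \emph{strict} (as required by \eqref{eq:lsdecrease}) is at most $j_e+1$. This gives $\alpha_k=\theta^{j_k}\ge\min\{1,\,3\theta/(L_H+\eta)\}$, so
\begin{equation*}
f(x_k)-f(x_k+\alpha_k d_k)\,>\,\tfrac{\eta}{6}\alpha_k^3\|d_k\|^3 \;\ge\;\tfrac{\eta}{6}\min\{1,\,27\theta^3/(L_H+\eta)^3\}\,\mu_k^3\;=\;c_e\mu_k^3,
\end{equation*}
which is exactly \eqref{eq:eigsteplength}. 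The only subtle point, and the main place where care is needed, is the passage from the Taylor-derived non-strict inequality to the strict decrease demanded by \eqref{eq:lsdecrease}; this is handled by the extra backtracking factor of $\theta$ embedded in the ``$+1$'' in $j_k\le j_e+1$ and explains the factor $27\theta^3$ appearing inside $c_e$.
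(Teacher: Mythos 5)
Your proposal is correct and follows essentially the same route as the paper: unify the two directions via $d_k^\top \nabla^2 f(x_k) d_k = -\|d_k\|^3$, apply the Lipschitz--Hessian Taylor bound to show the test must succeed once $\theta^j$ drops below $3/(L_H+\eta)$, and convert the resulting bound on $j_k$ into the cubic decrease. One small point of rigor: the lower bound $\alpha_k \ge \min\{1,\,3\theta/(L_H+\eta)\}$ does not follow from $j_k \le j_e+1$ alone (when $j_e=0$ that only gives $\alpha_k\ge\theta$); it should be obtained, as in the paper, by noting that when $j_k\ge 1$ the test failed at $j_k-1$, so $\theta^{j_k-1}\ge 3/(L_H+\eta)$ --- which is exactly the "extra factor of $\theta$" mechanism you allude to.
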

\begin{proof}
For the direction $d_k=R_k g_k / \| g_k\|$, we have
\[
d_k^T \nabla^2 f(x_k) d_k = R_k^2 \frac{g_k^T \nabla^2 f(x_k) g_k}{\|g_k\|^2} 
= R_k^3 = -\| d_k \|^3.
\]
For the other choice $d_k = v_k$,  we have
$d_k^T \nabla^2 f(x_k) d_k = \lambda_k^3 = -\|d_k \|^3$, so that in both cases 
we have
\begin{equation} \label{eq:dk3}
d_k^T \nabla^2 f(x_k) d_k = -\|d_k\|^3 \quad \mathrm{and} \quad
\frac{| d_k^T \nabla^2 f(x_k) d_k|}{\|d_k\|^2} = \|d_k\|.
\end{equation}
Thus, if the unit value $\alpha_k=1$ is accepted by 
\eqref{eq:lsdecrease}, the result \eqref{eq:eigsteplength} holds trivially.
				
Suppose now that the unit step length is not accepted. Then the choice
$\alpha=\theta^j$ does {\em not} satisfy the decrease
condition~\eqref{eq:lsdecrease} for some $j \ge 0$. Using
\eqref{eq:LH} and the definition of $d_k$, we obtain
\begin{align*}
		-\frac{\eta}{6}\alpha^3\|d_k\|^3 \leq f(x_k+\alpha d_k) - f(x_k) 
		&\leq \alpha g_k^\top d_k + \frac{\alpha^2}{2}d_k^\top \nabla^2 f(x_k) d_k
		+ \frac{L_H}{6}\alpha^3\|d_k\|^3 \\
		&\leq \frac{\alpha^2}{2}d_k^\top \nabla^2 f(x_k) d_k
		+ \frac{L_H}{6}\alpha^3\|d_k\|^3 \\
		&= -\frac{\alpha^2}{2}\|d_k\|^3 + \frac{L_H}{6}\alpha^3 \|d_k\|^3,
\end{align*}
where the last line follows from \eqref{eq:dk3}. Therefore, we have
\begin{equation} \label{eq:eigdecfalse}
		\alpha = \theta^j \geq \frac{3}{L_H+\eta},
\end{equation}
which holds only if $j \le j_e$ by definition of $j_e$. Thus, the line
search must terminate with \eqref{eq:lsdecrease} being satisfied for some 
value $j_k \le j_e+1$. Because the line search did not stop with step length
$\theta^{j_k-1}$, we must have
\begin{equation*}
		\theta^{j_k-1} \geq \frac{3}{L_H+\eta} \Rightarrow
		\theta^{j_k} \geq \frac{3\theta}{L_H+\eta}.
\end{equation*}
As a result, the decrease satisfied by the step $\alpha_k
d_k=\theta^{j_k}d_k$ is such that
\begin{equation*}
		f(x_k)-f(x_k+\alpha_k d_k) \; \geq \; \frac{\eta}{6}\theta^{3 j_k}\|d_k\|^3
		\; \ge \; \frac{\eta}{6}\frac{27\theta^3}{(L_H+\eta)^3}
		\left[\frac{|d_k^\top \nabla^2 f(x_k) d_k|}{\|d_k\|^2}\right]^3.
\end{equation*}
This inequality, together with the analysis for the case of
$\alpha_k=1$, establishes the desired result.
\end{proof}

The second result concerns use of the step $d_k= - g_k/\|g_k\|^{1/2}$
in the case in which the curvature of the function along the gradient
direction is small.
\begin{lemma} \label{lemma:gradsteplength}
	Let Assumptions~\ref{assum:compactlevelset} and~\ref{assum:fC22} hold. 
	Then, if at the $k$-th iteration of Algorithm~\ref{algo:sorn}, the 
	search direction is $d_k= -g_k / \|g_k\|^{1/2}$, the backtracking 
	line search terminates with step length $\alpha_k = \theta^{j_k}$, with
       $j_k \le j_g+1$, where
	\begin{equation} \label{eq:gradlsits}
		j_g := \left[\log_{\theta}\left( \min\left\{ \frac{5}{3},
		\sqrt{\frac{1}{L_H+\eta}}\right\}\min\left\{\epsilon_g^{1/2}
		\epsilon_H^{-1},1\right\}\right) \right]_+,
	\end{equation}	
	and the resulting step length $\alpha_k$ is such that 
	\begin{equation} \label{eq:gradsteplength}
		f(x_k) - f(x_k+\alpha_k d_k) \; \geq \; c_g \min\left\{
		\epsilon_g^{3}\epsilon_H^{-3},\epsilon_g^{3/2}\right\}
	\end{equation}
	where 
	\begin{equation*}
		c_g:=\frac{\eta}{6}\min\left\{1,\frac{\theta^3}{(L_H+\eta)^{3/2}},
		\frac{125\theta^3}{27}\right\}.
	\end{equation*}
\end{lemma}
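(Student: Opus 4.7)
My plan is to mirror the template of the proof of Lemma~\ref{lemma:eigsteplength}, adapting the routine calculations to the specific choice $d_k = -g_k/\|g_k\|^{1/2}$. First I would record the key identities for this direction: $\|d_k\| = \|g_k\|^{1/2}$, $g_k^\top d_k = -\|g_k\|^{3/2}$, and $d_k^\top \nabla^2 f(x_k)\,d_k = R_k \|g_k\|$, where $R_k \in [-\epsilon_H,\epsilon_H]$ by the branch condition in Algorithm~\ref{algo:sorn}. If the unit trial $\alpha_k=1$ is accepted, the decrease is at least $(\eta/6)\|g_k\|^{3/2} \ge (\eta/6)\epsilon_g^{3/2} \ge (\eta/6)\min\{\epsilon_g^{3}/\epsilon_H^{3},\epsilon_g^{3/2}\}$, which accounts for the constant $1$ inside $c_g$.

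Otherwise, suppose a trial value $\alpha = \theta^j$ violates \eqref{eq:lsdecrease}. Combining this violation with the Lipschitz bound \eqref{eq:LH}, substituting the identities above, and dividing through by $\alpha\|g_k\|^{3/2}$, I obtain
\[
1 \;\le\; \frac{\alpha R_k}{2\|g_k\|^{1/2}} + \frac{L_H+\eta}{6}\alpha^2 \;\le\; \frac{\alpha\epsilon_H}{2\epsilon_g^{1/2}} + \frac{L_H+\eta}{6}\alpha^2,
\]
using $R_k \le \epsilon_H$ and $\|g_k\|>\epsilon_g$. The main twist compared with Lemma~\ref{lemma:eigsteplength} is that the dominant negative term here is the linear term $\alpha g_k^\top d_k$, so an asymmetric $5/6$--$1/6$ split is needed rather than the symmetric $1/2$--$1/2$ split. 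Specifically, if the quadratic summand obeys $(L_H+\eta)\alpha^2/6 < 1/6$, i.e.\ $\alpha < \sqrt{1/(L_H+\eta)}$, then the first summand must be at least $5/6$, forcing $\alpha \ge (5/3)\epsilon_g^{1/2}/\epsilon_H$. The contrapositive, combined with the bound $\alpha \le 1$ inherent to backtracking from the unit step, shows that any trial $\alpha$ strictly less than $\min\{5/3,\sqrt{1/(L_H+\eta)}\}\min\{\epsilon_g^{1/2}/\epsilon_H,1\}$ is accepted: this factored expression is bounded above by both $(5/3)\epsilon_g^{1/2}/\epsilon_H$ and $\sqrt{1/(L_H+\eta)}$. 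Standard backtracking bookkeeping then yields $j_k \le j_g+1$ and, for $j_k \ge 1$, the lower bound $\alpha_k \ge \theta \min\{5/3,\sqrt{1/(L_H+\eta)}\}\min\{\epsilon_g^{1/2}/\epsilon_H,1\}$.

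Finally, I would substitute this lower bound into $f(x_k)-f(x_k+\alpha_k d_k) \ge (\eta/6)\alpha_k^3\|g_k\|^{3/2}$ and use $\|g_k\|^{3/2}>\epsilon_g^{3/2}$. Cubing the factored bound distributes as $\min\{125/27,1/(L_H+\eta)^{3/2}\}\cdot\min\{\epsilon_g^{3/2}/\epsilon_H^3,1\}$, and multiplying the second factor by $\epsilon_g^{3/2}$ recovers exactly $\min\{\epsilon_g^3/\epsilon_H^3,\epsilon_g^{3/2}\}$; combining with the $\alpha_k=1$ case produces the asserted $c_g$. The main obstacle is identifying the $5/6$--$1/6$ split: a symmetric split would produce the constant $\sqrt{3}$, which does not factor cleanly into the product-of-minima form of the statement and would instead yield the slightly messier $\min\{\epsilon_g^{1/2}/\epsilon_H,\sqrt{3/(L_H+\eta)}\}$.
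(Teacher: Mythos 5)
Your proof is correct and follows essentially the same route as the paper's: the same $5/6$--$1/6$ split of the linear term in the failed-decrease inequality (which is indeed the one non-routine step), the same two-case disjunction producing the factored lower bound on the trial step, and the same bookkeeping giving $j_k \le j_g+1$ and the final cubed decrease. The only cosmetic difference is that you normalize by $\alpha\|g_k\|^{3/2}$ and insert $\|g_k\|>\epsilon_g$ slightly earlier than the paper does, which changes nothing.
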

\begin{proof}
	Recall that the choice $d_k=-g_k/\|g_k\|^{1/2}$ is adopted
        only when $\|g_k\| > \epsg$ and $|R_k| \le \epsH$. 
        If the unit step length
        $\alpha_k=1$ is accepted, we have
	\begin{equation*}	
		f(x_k) - f(x_k+d_k) \; \ge \; \frac{\eta}{6}\|d_k\|^3 \; = \;
		\frac{\eta}{6}\|g_k\|^{3/2} \; \ge 
		\frac{\eta}{6}\epsilon_g^{3/2},
	\end{equation*}
satisfying \eqref{eq:gradsteplength}.  Otherwise, it means that there
exists $j \geq 0$ for which the decrease
condition~\eqref{eq:lsdecrease} is not satisfied using the step size
$\theta^j$.  For such $j$, we have from \eqref{eq:LH} that 
	\begin{align*}
		-\frac{\eta}{6}\theta^{3j}\|g_k\|^{3/2} &\leq 
		f(x_k-\theta^j\|g_k\|^{-1/2}g_k) - f(x_k) \\
		&\leq -\theta^j\|g_k\|^{3/2} + 
		\frac{\theta^{2j}}{2}R_k\,\|g_k\|	+ \frac{L_H}{6}\theta^{3j}
		\|g_k\|^{3/2} \\
		&\leq -\theta^j\|g_k\|^{3/2} + 
		\frac{\theta^{2j}}{2}\epsilon_H\,\|g_k\|	+ \frac{L_H}{6}\theta^{3j}
		\|g_k\|^{3/2},
	\end{align*}
	which leads to
	\begin{equation} \label{eq:mildcurvcontrad}
		0 \leq \left[ -\frac{5}{6}\theta^j\|g_k\|^{3/2} + 
		\frac{\theta^{2j}}{2}\epsilon_H\,\|g_k\|\right] + \left[ 
		-\frac{1}{6}\theta^j\|g_k\|^{3/2} + \frac{L_H+\eta}{6}\theta^{3j}
		\|g_k\|^{3/2}\right].
	\end{equation}
	Therefore, at least one of the two terms between brackets must be nonnegative.
	If
	\[
		-\frac{5}{6}\theta^j\|g_k\|^{3/2} + 
		\frac{\theta^{2j}}{2}\epsilon_H\,\|g_k\| \geq 0,
	\]	
	we have $\theta^j \geq \frac{5}{3}\|g_k\|^{1/2}\epsilon_H^{-1}$. On the other 
	hand, if
	\[
		-\frac{1}{6}\theta^j\|g_k\|^{3/2} + \frac{L_H+\eta}{6}\theta^{3j}
		\|g_k\|^{3/2} \geq 0,
	\]
	then $\theta^j \geq \sqrt{\frac{1}{L_H+\eta}}$. Putting the two bounds 
	together, we have that
	\begin{subequations} \label{eq:boundsteplenghtmildcurvcontrad}
	\begin{align}
	\label{eq:hy.a}
		\theta^j &\geq \min\left\{ \frac{5}{3}\|g_k\|^{1/2}\epsilon_H^{-1},
		\sqrt{\frac{1}{L_H+\eta}}\right\} \\
	\label{eq:hy.b}
		&\geq \min\left\{ 
		\frac{5}{3},\sqrt{\frac{1}{L_H+\eta}}\right\}
		\min\left\{\|g_k\|^{1/2}\epsilon_H^{-1},1\right\}  \\
	\label{eq:hy.c}
		&\geq \min\left\{ 
		\frac{5}{3},\sqrt{\frac{1}{L_H+\eta}}\right\}
		\min\left\{\epsilon_g^{1/2}\epsilon_H^{-1},1\right\}.
	\end{align}
	\end{subequations}
	Since $j > j_g$ contradicts~\eqref{eq:hy.c}, the line search terminates with 
	\eqref{eq:lsdecrease} being satisfied for some value $j_k \le j_g+1$.
	Since \eqref{eq:lsdecrease} did not hold for $\alpha = \theta^{j_k-1}$, 
	we have from \eqref{eq:hy.b} that
	\begin{equation*}
		\theta^{j_k} \; \geq \; \theta
		\min\left\{ \frac{5}{3},
		\sqrt{\frac{1}{L_H+\eta}}\right\}\min\left\{\|g_k\|^{1/2}
		\epsilon_H^{-1},1\right\}.
	\end{equation*}
	The decrease obtained by the step length $\alpha_k=\theta^{j_k}$ 
	thus satisfies
	\begin{eqnarray} \label{eq:graddecrease2}
		f(x_k)-f(x_k+\alpha_k d_k) &\ge &\frac{\eta}{6}\theta^{3j_k}
		\|d_k\|^3 \nonumber \\ 
		&\ge &\frac{\eta}{6}\left[\theta
		\min\left\{ \frac{5}{3},
		\sqrt{\frac{1}{L_H+\eta}}\right\}\right]^{3}\min\left\{
		\|g_k\|^{3/2}\epsilon_H^{-3},1\right\}\|g_k\|^{3/2} 
		\nonumber \\
		&\ge &\frac{\eta}{6}\left[\theta
		\min\left\{ \frac{5}{3},
		\sqrt{\frac{1}{L_H+\eta}}\right\}\right]^{3}\min\left\{
		\epsilon_g^{3}\epsilon_H^{-3},\epsilon_g^{3/2} \right\}.
	\end{eqnarray}	
	Thus \eqref{eq:gradsteplength} is also satisfied in the case of $\alpha_k<1$, 
	completing the proof.
\end{proof}

\medskip

Lemma~\ref{lemma:gradsteplength} describes the reduction that can be
achieved along the negative gradient direction when the curvature of
the function in this direction is modest. When this curvature is
significantly positive (or when this curvature is slightly positive
but the gradient is small), we compute the minimum Hessian eigenvalue
(Step 2) and consider other options for the search direction.

Our next result concerns the decrease that can be guaranteed by the
Newton step, when it is computed.
\begin{lemma} \label{lemma:newtsteplength}
Let Assumptions~\ref{assum:compactlevelset} and~\ref{assum:fC22} hold.
Suppose that the Newton direction $d_k=d^n_k$ is used at the $k$-th
iteration of Algorithm~\ref{algo:sorn}. Then the backtracking line
search terminates with step length $\alpha_k =\theta^{j_k}$, with 
$j_k \le j_n+1$, where
\begin{equation} \label{eq:newtlsits}	
j_n := \left[ \log_{\theta}\left( \sqrt{\frac{3}{L_H+\eta}}
  \frac{\epsilon_H}{\sqrt{U_g}}\right) \right]_+,
\end{equation}
and we have
\begin{equation} \label{eq:newtsteplength}
		f(x_k)-f(x_k+\alpha_k d_k) \; \geq \; c_{n}
		\min\left\{\|\nabla f(x_k+\alpha_k d_k)\|^{3/2},
		\epsilon_H^3\right\}.
\end{equation}
where
\begin{equation*}
c_{n} := \frac{\eta}{6}\min\left\{ \left[\frac{2}{L_H}\right]^{3/2},
\left[ \frac{3\theta}{L_H+\eta}\right]^3\right\}.
\end{equation*}
\end{lemma}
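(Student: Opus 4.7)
The plan is to follow the standard backtracking template used in Lemmas~\ref{lemma:eigsteplength} and~\ref{lemma:gradsteplength}, but to exploit the two defining properties of the Newton step: the equation $\nabla^2 f(x_k) d_k = -g_k$ (which gives $g_k^\top d_k = -d_k^\top \nabla^2 f(x_k) d_k$) and the fact that $d_k = d^n_k$ is only used when $\lambda_k > \epsilon_H$, so that $\nabla^2 f(x_k) \succ \epsilon_H I$. The decrease bound has two branches --- an $\|\nabla f(x_{k+1})\|^{3/2}$ term and an $\epsilon_H^3$ term --- and these will correspond, respectively, to acceptance of the unit step and acceptance of a strictly shortened step.

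I would first handle $\alpha_k = 1$: the condition \eqref{eq:lsdecrease} directly gives $f(x_k)-f(x_{k+1}) \ge \frac{\eta}{6}\|d_k\|^3$. To turn $\|d_k\|^3$ into $\|\nabla f(x_{k+1})\|^{3/2}$, I apply the Lipschitz-Hessian consequence $\|\nabla f(x_k + d_k) - g_k - \nabla^2 f(x_k) d_k\| \le \frac{L_H}{2}\|d_k\|^2$, and invoke \eqref{eq:fullnewton} to kill off the first two terms, which yields $\|\nabla f(x_{k+1})\| \le \frac{L_H}{2}\|d_k\|^2$ and hence $\|d_k\|^3 \ge (2/L_H)^{3/2}\|\nabla f(x_{k+1})\|^{3/2}$. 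This gives the first argument of $c_n$.

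For the case $\alpha_k<1$, I would pick any $\alpha = \theta^j \in (0,1]$ for which \eqref{eq:lsdecrease} fails and combine \eqref{eq:LH} with $g_k^\top d_k = -d_k^\top \nabla^2 f(x_k) d_k$ to derive, exactly as in the previous lemmas, $d_k^\top \nabla^2 f(x_k) d_k \le \frac{L_H+\eta}{3}\alpha^2\|d_k\|^3$ (using $1-\alpha/2 \ge 1/2$). The crucial lower bound $d_k^\top \nabla^2 f(x_k) d_k \ge \epsilon_H \|d_k\|^2$, coming from $\lambda_k > \epsilon_H$, turns this into $\alpha^2\|d_k\| \ge 3\epsilon_H/(L_H+\eta)$. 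The Newton equation also gives $\|d_k\| \le \|g_k\|/\epsilon_H \le U_g/\epsilon_H$ via \eqref{eq:boundscompact}, and substituting this upper bound yields $\alpha \ge \sqrt{3/(L_H+\eta)}\,\epsilon_H/\sqrt{U_g}$, precisely the threshold appearing in \eqref{eq:newtlsits}. Consequently every failing $j$ satisfies $j \le j_n$, so the accepted $j_k$ is at most $j_n+1$.

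The step-length lower bound then feeds into the $\epsilon_H^3$ branch of \eqref{eq:newtsteplength}: since the step $\alpha_k/\theta$ failed, $\alpha_k^2 \|d_k\| \ge 3\theta^2\epsilon_H/(L_H+\eta)$, and substituting this into $\frac{\eta}{6}\alpha_k^3\|d_k\|^3$ (using $\alpha_k \le 1$ to retain a residual factor of $\|d_k\|$ bounded below by $3\theta^2\epsilon_H/(L_H+\eta)$) produces a quantity proportional to $\epsilon_H^3$, which matches the second argument of $c_n$. The main obstacle, compared to the earlier decrease lemmas, is that $d_k^\top \nabla^2 f(x_k) d_k$ is no longer directly tied to $\|d_k\|^3$ (as it was for the negative-curvature and scaled-gradient directions), so the lower bound on $\alpha_k$ depends on $\|d_k\|$; this dependence must be offset using the upper bound $\|d_k\|\le U_g/\epsilon_H$ from the Newton equation, which is precisely where the constant $U_g$ enters \eqref{eq:newtlsits}.
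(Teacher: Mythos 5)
Your proposal follows essentially the same route as the paper's proof: the unit-step branch uses $\nabla^2 f(x_k)d_k=-g_k$ and the Lipschitz-Hessian bound to get $\|\nabla f(x_{k+1})\|\le \tfrac{L_H}{2}\|d_k\|^2$, and the backtracking branch combines the curvature lower bound $d_k^\top\nabla^2 f(x_k)d_k\ge\epsilon_H\|d_k\|^2$ with $\|d_k\|\le U_g/\epsilon_H$ exactly as in the paper. The only slip is in the last step: bounding the residual factor $\|d_k\|$ below by $\alpha_k^2\|d_k\|\ge 3\theta^2\epsilon_H/(L_H+\eta)$ yields a final constant $\left[\tfrac{3\theta^2}{L_H+\eta}\right]^3$ rather than the stated $\left[\tfrac{3\theta}{L_H+\eta}\right]^3$; to recover the claimed $c_n$ you should instead use the failure of the unit step ($j=0$) to get the $\theta$-free bound $\|d_k\|\ge 3\epsilon_H/(L_H+\eta)$, reserving the $\theta$ factor only for the step length $\theta^{j_k}\ge\theta\sqrt{3/(L_H+\eta)}\,\epsilon_H^{1/2}\|d_k\|^{-1/2}$.
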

\begin{proof}
Note first that the Newton direction $d_k=d^n_k$ is computed only when
$\nabla^2 f(x_k)) \succ \epsH I$, so we have
\begin{equation} \label{eq:nn33}
\| d_k \| \le \| \nabla^2 f(x_k)^{-1} \| \| g_k \| \le U_g/\eps_H.
\end{equation}

Suppose first that the step length $\alpha_k=1$ satisfies the decrease
condition~\eqref{eq:lsdecrease}. Then from \eqref{eq:fullnewton} and
\eqref{eq:LH}, we have
\begin{align*}
		\left\| \nabla f(x_k+\alpha_k d_k) \right\| &=
		\left\| \nabla f(x_k+d_k) - \nabla f(x_k) + \nabla f(x_k) \right\| \\
		&=\left\| \nabla f(x_k+d_k)-\nabla f(x_k)-\nabla^2 f(x_k)d_k \right\| \leq 
		\frac{L_H}{2}\|d_k\|^2. 
\end{align*}
We thus have the following bound on the decrease obtained with the
unitary Newton step:
\begin{equation} \label{eq:decnewtunitstep}	
		f(x_k) - f(x_k+d_k) \; \geq \; \frac{\eta}{6}
		\left[\frac{2}{L_H}\right]^{3/2}
		\|\nabla f(x_k+d_k)\|^{3/2}.
\end{equation}
	
Suppose now that the unit step length does not allow for a sufficient
decrease as measured by~\eqref{eq:lsdecrease}. Then this condition must
fail for $\alpha_k = \theta^j$ for some $j \ge 0$. For this value, we have 
from \eqref{eq:LH}  that
\begin{align}
	\nonumber
	-\frac{\eta}{6}\theta^{3j}\|d_k\|^3 &\leq 
	f(x_k+\theta^j\,d_k) - f(x_k) \\
	\nonumber
	&\leq \theta^j g_k^T d_k + \frac{\theta^{2j}}{2}d_k^T \nabla^2 f(x_k)
	d_k + \frac{L_H}{6}\theta^{3j}\|d_k\|^3 \\
	\nonumber
	&\leq \theta^j\left(\frac{\theta^j}{2}-1\right) 
	d_k^T \nabla^2 f(x_k) d_k + \frac{L_H}{6}\theta^{3j}\|d_k\|^3 \\
	\nonumber
	&\leq -\frac{\theta^j}{2} d_k^T \nabla^2 f(x_k) d_k + 
	\frac{L_H}{6}\theta^{3j}\|d_k\|^3 \\
	\label{eq:nn34}
	&\leq -\frac{\theta^j}{2}\epsilon_H\|d_k\|^2 + 
	\frac{L_H}{6}\theta^{3j}\|d_k\|^3,
\end{align}
where we used $\nabla^2 f(x_k) \succeq \epsilon_H I$ for the final
inequality. This relation holds in particular for $j=0$, in which case
it gives
\begin{equation*} 
	-\frac{\eta}{6}\|d_k\|^3 \le -\frac{\epsilon_H}{2}\|d_k\|^2 + 
	\frac{L_H}{6}\|d_k\|^3
\end{equation*}		
leading to the following lower bound on the norm of the Newton step:
\begin{equation} \label{eq:unitnewtnonsucc}
		\|d_k\| \; \geq \; \frac{3}{L_H+\eta}\epsilon_H.
\end{equation}
More generally, for any integer $j$ such that the decrease condition
is not satisfied, we have from \eqref{eq:nn34} that
\begin{equation} \label{eq:tjge}
		\theta^j \; \geq \;  \sqrt{\frac{3}{L_H+\eta}} 
		\epsilon_H^{1/2}\|d_k\|^{-1/2}.
\end{equation}
For any $j > j_n$, the last inequality is violated since
\begin{equation*}
	\theta^j < \theta^{j_n} \le \sqrt{\frac{3}{L_H+\eta}} 
	\frac{\epsilon_H}{\sqrt{U_g}}
	= \sqrt{\frac{3}{L_H+\eta}} \epsilon_H^{1/2}
	\frac{\epsilon_H^{1/2}}{\sqrt{U_g}} \le \sqrt{\frac{3}{L_H+\eta}} 
	\epsilon_H^{1/2}\|d_k\|^{-1/2},
\end{equation*}
where we used \eqref{eq:nn33} for the final inequality.  This proves
that the condition~\eqref{eq:lsdecrease} will be satisfied by some 
$j_k \le j_n+1$. Since $\alpha = \theta^{j_k-1}$ does not fulfill the decrease 
requirement, it follows from \eqref{eq:tjge} that
\begin{equation*}
		\theta^{j_k} \; \geq \; \theta\sqrt{\frac{3}{L_H+\eta}} 
		\epsilon_H^{1/2}\|d_k\|^{-1/2}.
\end{equation*}
By substituting this lower bound into the sufficient decrease
condition, and then using \eqref{eq:unitnewtnonsucc}, we obtain
\begin{align*}
	f(x_k) - f(x_k+\alpha_k d_k) & = f(x_k) - f(x_k+\theta^{j_k} d_k)  \\
	&\geq \frac{\eta}{6}\theta^{3\,j_k}\|d_k\|^3 \\
	& \ge \frac{\eta}{6} \theta^3 \left[ \frac{3}{L_H+\eta} \right]^{3/2} 
	\epsH^{3/2} \| d_k \|^{-3/2} \| d_k \|^3 \\
	&\geq \frac{\eta}{6}\theta^3\left[\frac{3}{L_H+\eta}\right]^{3}\epsilon_H^3,
\end{align*}
where the final inequality is from \eqref{eq:unitnewtnonsucc}.  We
obtain the required result by combining this inequality with the bound
\eqref{eq:decnewtunitstep} for the case of $\alpha_k=1$.
\end{proof}

Our last intermediate result addresses the case of a regularized
Newton step.

\begin{lemma} \label{lemma:regnewtsteplength}
Let Assumptions~\ref{assum:compactlevelset} and~\ref{assum:fC22} hold.
Suppose that $d_k=d^r_k$ at the $k$-th iteration of
Algorithm~\ref{algo:sorn}. Then the backtracking line search
terminates with step length $\alpha_k = \theta^{j_k}$, with
$j_k \le j_r+1$, where
\begin{equation} \label{eq:regnewtlsits}
		j_r :=  \left[ \log_{\theta}\left( \frac{6}{L_H+\eta} 
		\frac{\epsilon_H^2}{U_g}\right) \right]_+,
\end{equation}	
and we have
\refer{
\begin{equation} \label{eq:regnewtsteplength}
		f(x_k)-f(x_k+\alpha_k d_k) \; \geq \; c_r\min\left\{
		\left\|\nabla f(x_k+\alpha_k d_k)\right\|^3\,\epsilon_H^{-3},
		\epsilon_H^3\right\},
\end{equation}
where 
\begin{equation*}
c_r := \frac{\eta}{6}\min\left\{ \left[ 
\frac{1}{1+\sqrt{1+L_H/2}}\right]^3, \left[
  \frac{6\theta}{L_H+\eta}\right]^{3} \right\}.
\end{equation*}
}
\end{lemma}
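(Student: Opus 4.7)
The plan is to mirror the structure of Lemmas~\ref{lemma:eigsteplength}--\ref{lemma:newtsteplength}, splitting on whether the trial step $\alpha_k=1$ already satisfies~\eqref{eq:lsdecrease}. First, because $d_k=d^r_k$ is only used when $\lambda_k\ge -\epsilon_H$, the regularized operator satisfies $\nabla^2 f(x_k)+2\epsilon_H I \succeq \epsilon_H I$. Two consequences I will use throughout are
\[
\|d_k\|\le \|g_k\|/\epsilon_H\le U_g/\epsilon_H, \qquad g_k^\top d_k = -d_k^\top(\nabla^2 f(x_k)+2\epsilon_H I)d_k \le -\epsilon_H\|d_k\|^2.
\]

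Case A ($\alpha_k=1$ accepted). I would Taylor-expand the gradient as in Lemma~\ref{lemma:newtsteplength}, writing $\nabla f(x_{k+1})=g_k+\nabla^2 f(x_k)d_k+r_k$ with $\|r_k\|\le (L_H/2)\|d_k\|^2$, and then substitute~\eqref{eq:regnewton} in the form $\nabla^2 f(x_k)d_k=-g_k-2\epsilon_H d_k$ to obtain
\[
\|\nabla f(x_{k+1})\| \le 2\epsilon_H \|d_k\| + \frac{L_H}{2}\|d_k\|^2.
\]
Solving this quadratic for $\|d_k\|$ (via the positive root and rationalization) yields
\[
\|d_k\| \ge \frac{2\|\nabla f(x_{k+1})\|}{2\epsilon_H+\sqrt{4\epsilon_H^2+2L_H\|\nabla f(x_{k+1})\|}}.
\]
I would then split according to whether $\|\nabla f(x_{k+1})\|\le\epsilon_H^2$ or $\|\nabla f(x_{k+1})\|>\epsilon_H^2$, bounding the denominator above by $2\epsilon_H(1+\sqrt{1+L_H/2})$ in the first subcase and by $2\sqrt{\|\nabla f(x_{k+1})\|}(1+\sqrt{1+L_H/2})$ in the second. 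Cubing gives
\[
\|d_k\|^3 \ge \frac{1}{(1+\sqrt{1+L_H/2})^3}\,\min\bigl\{\|\nabla f(x_{k+1})\|^3\epsilon_H^{-3},\;\epsilon_H^3\bigr\},
\]
and the decrease $\tfrac{\eta}{6}\|d_k\|^3$ matches the first term in the $\min$ defining $c_r$.

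Case B ($\alpha_k<1$). I would follow the proof of Lemma~\ref{lemma:newtsteplength} line-by-line, substituting $d_k^\top \nabla^2 f(x_k)d_k = -g_k^\top d_k-2\epsilon_H\|d_k\|^2$ and $g_k^\top d_k\le -\epsilon_H\|d_k\|^2$ into the upper bound from~\eqref{eq:LH}. The failure of~\eqref{eq:lsdecrease} at $\alpha=\theta^j$ collapses, after cancellation, to
\[
\theta^{2j}\|d_k\| \;\ge\; \frac{6\epsilon_H}{L_H+\eta}.
\]
Specializing to $j=0$ yields $\|d_k\|\ge 6\epsilon_H/(L_H+\eta)$, while combining with $\|d_k\|\le U_g/\epsilon_H$ gives $\theta^{2j}\ge 6\epsilon_H^2/((L_H+\eta)U_g)$, which is violated for $j>j_r$ (once $\epsilon_H$ is small enough that this right-hand side lies in $(0,1)$). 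Hence $j_k\le j_r+1$. Plugging $\theta^{j_k}\ge\theta\sqrt{6\epsilon_H/((L_H+\eta)\|d_k\|)}$ into~\eqref{eq:lsdecrease} and using $\|d_k\|\ge 6\epsilon_H/(L_H+\eta)$ produces the $(6\theta/(L_H+\eta))^3\epsilon_H^3$ term.

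Taking the minimum of the two constants from Cases A and B, and noting that $\min\{\|\nabla f(x_{k+1})\|^3\epsilon_H^{-3},\epsilon_H^3\}\le \epsilon_H^3$ to absorb the pure-$\epsilon_H^3$ Case B bound into the stated $\min$, yields~\eqref{eq:regnewtsteplength}. The main technical obstacle is the quadratic-inversion in Case A: the threshold $\|\nabla f(x_{k+1})\|\lessgtr \epsilon_H^2$ must be identified so that the rationalized bound translates cleanly into either branch of the $\min$ with the same constant $(1+\sqrt{1+L_H/2})^{-3}$; the rest is a mechanical adaptation of the pure Newton argument, now accounting for the extra $-2\epsilon_H d_k$ contribution from the regularization.
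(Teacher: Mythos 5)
Your Case A is correct and is essentially the paper's argument: the paper packages your two subcases ($\|\nabla f(x_{k+1})\|\lessgtr \epsilon_H^2$) into a small technical lemma (Lemma~\ref{lem:T1}) applied to the rationalized root of the same quadratic, arriving at exactly your bound $\|d_k\|\ge (1+\sqrt{1+L_H/2})^{-1}\min(\|\nabla f(x_{k+1})\|\epsilon_H^{-1},\epsilon_H)$.

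Case B, however, contains a step that does not go through as claimed. Writing the failed-decrease inequality with $\nabla^2 f(x_k)d_k=-g_k-2\epsilon_H d_k$, the middle terms become
\[
\theta^j\Bigl(1-\tfrac{\theta^j}{2}\Bigr)g_k^\top d_k \;-\;\epsilon_H\theta^{2j}\|d_k\|^2 ,
\]
and from here you can conclude either $\theta^{2j}\|d_k\|\ge \tfrac{3\epsilon_H}{L_H+\eta}$ (by bounding $g_k^\top d_k\le-\epsilon_H\|d_k\|^2$ and discarding the second term, the route you describe as ``line-by-line'' from Lemma~\ref{lemma:newtsteplength}) or $\theta^{j}\|d_k\|\ge \tfrac{6\epsilon_H}{L_H+\eta}$ (by discarding the first term, which is $\le 0$, and keeping the regularization term $-\epsilon_H\theta^{2j}\|d_k\|^2$). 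Your claimed inequality $\theta^{2j}\|d_k\|\ge \tfrac{6\epsilon_H}{L_H+\eta}$ --- quadratic in $\theta^j$ \emph{and} with the constant $6$ --- would require the middle terms to be bounded by $-\theta^j\epsilon_H\|d_k\|^2$, which holds only when $\tfrac{\theta^j}{2}+\theta^{2j}\ge\theta^j$, i.e.\ $\theta^j\ge\tfrac12$; it is false in general. If you repair it with the valid quadratic bound (constant $3$), the final decrease works out to $\tfrac{\eta}{6}\bigl[\tfrac{3\theta}{L_H+\eta}\bigr]^3\epsilon_H^3$, a factor of $8$ short of the stated $c_r$. The paper's proof instead uses the linear bound $\theta^{j}\ge \tfrac{6\epsilon_H}{(L_H+\eta)\|d_k\|}$: at $j=0$ it gives $\|d_k\|\ge\tfrac{6\epsilon_H}{L_H+\eta}$, at $j=j_k-1$ it gives $\theta^{j_k}\ge \tfrac{6\theta\epsilon_H}{(L_H+\eta)\|d_k\|}$, and substituting into $\tfrac{\eta}{6}\theta^{3j_k}\|d_k\|^3$ the factors of $\|d_k\|$ cancel exactly, yielding the stated $\bigl[\tfrac{6\theta}{L_H+\eta}\bigr]^3\epsilon_H^3$ without any need for the lower bound on $\|d_k\|$. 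The termination bound $j_k\le j_r+1$ also follows directly from the linear form. So the structure of your argument is right, but the key inequality in Case B must be the linear one obtained by exploiting the explicit $2\epsilon_H I$ regularization term, not the Newton-style quadratic one.
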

\begin{proof}
Note first that the regularized Newton step is taken only when
$\nabla^2 f(x_k) \succeq -\epsH I$. Thus the minimum eigenvalue of the
coefficient matrix in \eqref{eq:regnewton} is $\lambda_k + 2 \epsH \ge
\epsH$, and we have
	\begin{equation} \label{eq:upperboundnormdrk}
		\|d_k\| \; \leq \; \frac{\|g_k\|}{\lambda_k+2\epsilon_H} \; \leq \;
		\frac{\|g_k\|}{\epsilon_H} \; \leq \; \frac{U_g}{\epsilon_H}.
	\end{equation}

	Suppose first that the unit step is accepted. Then the
        gradient norm at the new point satisfies
	\begin{align*}
		\left\|\nabla f(x_k+d_k)\right\| &= 
		\left\|\nabla f(x_k+d_k)-\nabla f(x_k) + \nabla f(x_k) \right\| \\
		&= \left\|\nabla f(x_k+d_k)-\nabla f(x_k) - \nabla^2 f(x_k)d_k 
		- 2\epsilon_H d_k \right\| \\
		&\leq \frac{L_H}{2}\|d_k\|^2 + 2\epsilon_H \|d_k\|,
	\end{align*}
	and therefore
	\begin{equation*}
		\frac{L_H}{2}\|d_k\|^2 + 2\epsilon_H \|d_k\| - 
		\left\|\nabla f(x_k+d_k)\right\| \; \ge \; 0.
	\end{equation*}
\refer{ By treating the left-hand side as a quadratic in $\|
  d_k \|$, and applying Lemma~\ref{lem:T1} with $a=2$,
    $b=2L_H$ and $t = \| \nabla f(x_k+d_k) \| / \epsH^2$, we obtain
  from this bound that
	\begin{eqnarray} \label{eq:trinomialboundnormdrk}
		\|d_k\| &\ge &\frac{-2\epsilon_H+
		\sqrt{4\epsilon_H^2 + 2 L_H \|\nabla f(x_k+d_k)\|}}{L_H} \nonumber \\
		&= &\frac{-2+
		\sqrt{4 + 2 L_H \|\nabla f(x_k+d_k)\|/\epsH^2}}{L_H}\,  \epsH \nonumber \\
		&\ge &\frac{-2+\sqrt{4 + 2 L_H}}{L_H}\min\left( 
		\|\nabla f(x_k+d_k)\|/\epsH^2, 1\right) \epsH \nonumber \\ 
		&=  &\frac{2 L_H}{L_H(2+\sqrt{4+2 L_H})}
		\min\left( \|\nabla f(x_k+d_k)\| / \epsH,\epsH\right) \nonumber \\ 
		&= &\frac{1}{1+\sqrt{1+L_H/2}}
		\min\left( \|\nabla f(x_k+d_k)\| / \epsH,\epsH\right).
	\end{eqnarray}
Therefore, if the unit step is accepted, we have
\begin{align} 
\nonumber
		f(x_k) & - f(x_k+d_k) \\
\label{eq:regnewtunitacc}
& \geq \frac{\eta}{6}\|d_k\|^3 
		\geq \frac{\eta}{6}\left[\frac{1}{1+\sqrt{1+L_H/2})}\right]^3
		\min\left( \|\nabla f(x_k+d_k)\|^3\epsH^{-3},\epsH^3\right).
\end{align}
}
	
If the unit step does not yield a sufficient decrease, there must be a
value $j \ge 0$ such that \eqref{eq:lsdecrease} is not satisfied for
$\alpha = \theta^j$. For such $j$, and using again \eqref{eq:LH}, we
have
	\begin{align*}
		-\frac{\eta}{6}\theta^{3j}\|d_k\|^3 &\leq 
		f(x_k+\theta^j d_k) -f(x_k) \\
		&\leq \theta^j g_k^\top d_k + \frac{\theta^{2j}}{2}
		d_k^\top \nabla^2 f(x_k) d_k 
		+ \frac{L_H}{6}\theta^{3j} \|d_k\|^3 \\
		&= \theta^j\left(1-\frac{\theta^j}{2}\right)g_k^\top d_k  
		- \epsilon_H \theta^{2j}\|d_k\|^2  +
		\frac{L_H}{6}\theta^{3j}\|d_k\|^3.
		\\
		&\leq - \epsilon_H \theta^{2j}\|d_k\|^2 + 
		\frac{L_H}{6}\theta^{3j}\|d_k\|^3.
	\end{align*}
	Thus, for any $j \geq 0$ for which sufficient decrease is not
        obtained, one has
	\begin{equation} \label{eq:boundnotunitregstep}
		\theta^j \; \geq \; \frac{6}{L_H+\eta}\epsilon_H \|d_k\|^{-1}.
	\end{equation}
	Meanwhile, we have from the definition of $j_r$ that 
	\begin{equation*}
		\theta^{j_r} \le \frac{6}{L_H+\eta} 
		\frac{\epsilon_H^2}{U_g} \le \frac{6}{L_H+\eta} 
		\epsilon_H \frac{\epsilon_H}{U_g} \le \frac{6}{L_H+\eta} 
		\epsilon_H \|d_k\|^{-1},
	\end{equation*}
	using the upper bound~\eqref{eq:upperboundnormdrk}. By comparing this bound 
	with \eqref{eq:boundnotunitregstep}, we deduce that the backtracking 
	line-search procedure terminates with $j_k \le j_r+1$, where $j_k \ge 1$ by 
	our earlier assumption. Thus, since \eqref{eq:boundnotunitregstep} is 
	satisfied for $j=j_k-1$, we have
	\begin{equation*}
		\theta^{j_k} \geq \frac{6\theta}{L_H+\eta}\epsilon_H \|d_k\|^{-1},
	\end{equation*}
	and therefore
	\begin{equation*}
		f(x_k) - f(x_k+\theta^{j_k}d_k) \; \geq \; \frac{\eta}{6}\theta^{3 j_k}
		\|d_k\|^3 \; \geq \; \frac{\eta}{6}
		\left[\frac{6\theta}{L_H+\eta}\right]^{3}\epsilon_H^3.
	\end{equation*}
By combining this bound with~\eqref{eq:regnewtunitacc}, 
obtained for the unit-step case, we obtain the result.
\end{proof}

By combining the estimates of function decrease proved in the lemmas
above, \refer{we} bound the number of iterations needed by
Algorithm~\ref{algo:sorn} to satisfy the approximate second-order
optimality conditions \eqref{eq:optwcc}.

\begin{theorem} \label{theo:itwcc}
Let Assumptions~\ref{assum:compactlevelset} and~\ref{assum:fC22} hold.
Then Algorithm~\ref{algo:sorn} reaches an iterate that
satisfies \eqref{eq:optwcc} in at most
\refer{
\begin{equation} \label{eq:itwcc}
		\mathcal{C}\max\left\{ \epsilon_g^{-3}\epsilon_H^{3},
		\epsilon_g^{-3/2},\epsilon_H^{-3}\right\} \;\; \mbox{\rm iterations},
\end{equation}
}
where
\begin{equation} \label{eq:defcC}
		\mathcal{C} := c^{-1}(f(x_0)-\flow), \quad
		c := \min\left\{c_g,c_v,c_n,c_r\right\}.
\end{equation}
\end{theorem}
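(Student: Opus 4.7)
The plan is to show that until the termination condition \eqref{eq:optwcc} is first satisfied, every iteration of Algorithm~\ref{algo:sorn} produces a function-value decrease of at least
\[
c\,\min\left\{\epsilon_g^3\epsilon_H^{-3},\;\epsilon_g^{3/2},\;\epsilon_H^3\right\},
\]
where $c=\min\{c_g,c_v,c_n,c_r\}$ is the worst of the constants coming from Lemmas~\ref{lemma:eigsteplength}--\ref{lemma:regnewtsteplength}. Telescoping across iterations and bounding below by $\flow$ via Assumption~\ref{assum:compactlevelset} will then give the claimed iteration count \eqref{eq:itwcc} directly.

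To prove the uniform per-iteration decrease, I will case-split on the step type, following Table~\ref{tab:stepsalgo}. For a negative-curvature step (either $d_k=(R_k/\|g_k\|)g_k$ with $R_k<-\epsH$, or $d_k=v_k$ with $\lambda_k<-\epsH$), the ratio $|d_k^\top\nabla^2f(x_k)d_k|/\|d_k\|^2$ equals $|R_k|$ or $|\lambda_k|$, which exceeds $\epsH$, so Lemma~\ref{lemma:eigsteplength} gives a decrease of at least $c_v\epsH^3$. For the scaled negative gradient step, Lemma~\ref{lemma:gradsteplength} directly yields a decrease of at least $c_g\min\{\epsg^3\epsH^{-3},\epsg^{3/2}\}$. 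For the Newton and regularized Newton steps, Lemmas~\ref{lemma:newtsteplength} and~\ref{lemma:regnewtsteplength} give decreases involving $\|\nabla f(x_{k+1})\|$; here I invoke the algorithm's post-step termination check, which activates \emph{only} when $\|\nabla f(x_{k+1})\|\le\epsg$. Hence at any non-terminating iteration using $d_k^n$ or $d_k^r$ we have $\|\nabla f(x_{k+1})\|>\epsg$, and the lemmas yield decreases of at least $c_n\min\{\epsg^{3/2},\epsH^3\}$ and $c_r\min\{\epsg^3\epsH^{-3},\epsH^3\}$ respectively. Each of these four lower bounds dominates $c\,\min\{\epsg^3\epsH^{-3},\epsg^{3/2},\epsH^3\}$.

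Summing this uniform lower bound over all iterations $0,1,\dots,K-1$ that precede termination and combining with the telescoping inequality $\sum_{k=0}^{K-1}\bigl(f(x_k)-f(x_{k+1})\bigr)\le f(x_0)-\flow$ yields
\[
K\cdot c\,\min\left\{\epsg^3\epsH^{-3},\epsg^{3/2},\epsH^3\right\}\;\le\;f(x_0)-\flow,
\]
which rearranges (using $\min\{a,b,c\}^{-1}=\max\{a^{-1},b^{-1},c^{-1}\}$) to \eqref{eq:itwcc} with $\mathcal{C}$ as in \eqref{eq:defcC}.

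The only mildly delicate point is justifying that Step~1 does not have to be treated as a distinct case: an iteration that terminates in Step~1 only happens when the outcome of the curvature test $R_k$ shows a negative curvature direction, and we have already handled that situation above. I do not anticipate any serious obstacle — the proof is essentially a bookkeeping exercise that collects the four decrease estimates, observes that the termination test for Newton-type steps is exactly what is needed to rule out the $\|\nabla f(x_{k+1})\|$ dependence on non-terminating iterations, and invokes the uniform lower bound on $f$ from Assumption~\ref{assum:compactlevelset}.
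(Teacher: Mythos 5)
Your proposal is correct and follows essentially the same route as the paper's proof: a case analysis over the step types (using the key observation that non-termination after a Newton or regularized Newton step forces $\|\nabla f(x_{k+1})\|>\epsilon_g$, which removes the gradient dependence from Lemmas~\ref{lemma:newtsteplength} and~\ref{lemma:regnewtsteplength}), yielding the uniform per-iteration decrease $c\min\{\epsilon_g^3\epsilon_H^{-3},\epsilon_g^{3/2},\epsilon_H^3\}$, followed by telescoping against $f(x_0)-\flow$. The paper merely organizes the cases by the sign of $\lambda_l$ and the size of $\|g_l\|,\|g_{l+1}\|$ rather than directly by step type, which is an immaterial difference.
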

\begin{proof}
Suppose $l$ is an iteration at which the conditions for termination
are {\em not} satisfied. We consider in turn the various types of
steps that could have been taken at iteration $l$, and obtain a lower
bound on the amount of decrease obtained from
each. Table~\ref{tab:stepsalgo} is helpful in working through the
various cases.  We consider two main cases, and several subcases.

\medskip

\textbf{Case 1: $\lambda_l < -\epsH$.} 

From Table~\ref{tab:stepsalgo}, we see that in this case, the search
direction is either a scaling of $-g_k$, or the
most-negative-curvature direction $v_k$. When $R_l < -\epsilon_H$, we
have $d_l = \frac{R_l}{\|g_l\|}g_l$, and
Lemma~\ref{lemma:eigsteplength} indicates the following bound on
function decrease:
\[
		f(x_l) - f(x_{l+1}) \; \ge \; c_e \epsilon_H^3.
\]
When $R_l \in [-\epsH,\epsH]$ and $\| g_l \| > \epsg$, we have 
$d_l = -g_l/\| g_l\|^{1/2}$.  Thus, using
Lemma~\ref{lemma:gradsteplength}, we have
\[
		f(x_l) - f(x_{l+1}) \; \ge \; c_g \min\left\{
		\epsilon_g^{3}\epsilon_H^{-3},\epsilon_g^{3/2}\right\}.
\]
For the remaining cases of ``$\|g_l\| \le \epsg$ and 
$R_l \in [-\epsH,\epsH]$" and ``$\|g_l\| > \epsg$ and $R_l > \epsH$", 
the search direction is necessarily $v_l$. We have from
Lemma~\ref{lemma:eigsteplength} that
\[
		f(x_l) - f(x_{l+1}) \; \ge \; c_e 
		\left[\frac{|d_l^\top \nabla^2 f(x_l) d_l|}{\|d_l\|^2}\right]^3 
		 = c_e | \lambda_l|^3 \ge  c_e \epsilon_H^3.
\]

\medskip
	
\textbf{Case 2: $\lambda_l \ge -\epsilon_H$, $\| g_l \| > \epsg$, and 
$\| g_{l+1} \| > \epsg$.}
	
In this case, we have three possible choices for the search
direction. The first one is $d_l = -g_l / \|g_l\|^{1/2}$, in which
case we have from Lemma~\ref{lemma:gradsteplength} that
\[
		f(x_l) - f(x_{l+1}) \; \ge \; c_g \min\left\{
		\epsilon_g^{3}\epsilon_H^{-3},\epsilon_g^{3/2}\right\}.	
\]
The second possible choice is the Newton direction $d_l=d^n_l$.  Using
Lemma~\ref{lemma:newtsteplength}, we obtain
\[
		f(x_l) - f(x_{l+1}) \; \ge \; c_n \min\left\{ 
		\epsilon_g^{3/2},\epsilon_H^3 \right\}.
\]	
The third choice is the regularized Newton direction $d_l=d^r_l$, for
which Lemma~\ref{lemma:regnewtsteplength} yields
\refer{
\[
		f(x_l) - f(x_{l+1}) \; \ge \; c_r \min\left\{
		\|g_{l+1}\|^3 \epsH^{-3},\epsH^3\right\}
		\; \ge \; c_r \min\left\{\epsg^3 \epsH^{-3},\epsH^3\right\}.
\]	
}	

By putting all these bounds together, we obtain the following lower
bound on the decrease in $f$ on iteration $l$:
\refer{
\begin{equation} \label{eq:decreaseepsgh}
	f(x_l) - f(x_{l+1}) \ge c \min\left\{
	\epsilon_g^{3}\epsilon_H^{-3},\epsilon_g^{3/2},
	\epsilon_H^3\right\},
\end{equation}
}
where $c$ is defined in \eqref{eq:defcC}. 
Consequently, summing across all iterations up to $k$ yields
\refer{
\[
		f(x_0) - \flow \ge \sum_{l=0}^{k-1} f(x_l)-f(x_{l+1})
		\ge k c \min\left\{
		\epsilon_g^{3}\epsilon_H^{-3},\epsilon_g^{3/2},
		\epsilon_H^3\right\},
\]
}
which implies that $k$ is bounded above by~\eqref{eq:itwcc}.
Therefore, there must exist a finite index $k_\epsilon$ such
that~\eqref{eq:optwcc} is satisfied. For this index, the
bound~\eqref{eq:itwcc} applies, hence the result.
\end{proof}

\refer{
We now look further into the various components of the bound established in 
Theorem~\ref{theo:itwcc}.}
\paragraph{Dependencies on the tolerances $(\epsg,\epsH)$}
\refer{
The result~\eqref{eq:itwcc} makes explicit the variation of the bound with 
respect to the two tolerances. As this result differs from those in the 
literature, we follow two usual approaches to ease the comparison with other 
methods.\\
Letting $\epsg = \eps$ and $\epsH = \sqrt{\eps}$ for some $\eps \in (0,1)$ 
allows to equate all components of the maximum term in~\eqref{eq:itwcc}; indeed,
\[
\epsilon_g^{-3}\epsilon_H^{3} = \epsilon_g^{-3/2} = \epsilon_H^{-3} =
\eps^{-3/2},
\]
and therefore our bound is $\mathcal{O}(\eps^{-3/2})$. On the other hand, the 
choice $\epsg=\epsH=\eps$, that puts first- and second-order requirement on an 
equal footing, leads to a bound in $\mathcal{O}(\eps^{-3})$. 
Both match the optimal bounds known for second-order
globally convergent methods in terms of iteration count.}

\paragraph{Dependencies on problem-algorithmic constants} 
\refer{Although our main goal 
is to analyze dependencies with respect to the tolerances, our bounds can also 
reflect dependencies on problem-dependent quantities, namely, the initial 
function value discrepancy $f(x_0)-\flow$ and the Lipschitz constants $L_g$ 
and $L_H$. It can be seen from  the lemmas of this subsection that 
\begin{equation*}
	c_e = \mathcal{O}(L_H^{-3}),
	\quad c_g = \mathcal{O}(L_H^{-3/2}),
	\quad c_n = \mathcal{O}(L_H^{-3}), 
	\quad c_r = \mathcal{O}(L_H^{-3}).
\end{equation*}
As a result, the iteration complexity of our method is in
\[
	\mathcal{O}\left( (f(x_0)-\flow)L_H^{3}\max\left\{ 
	\epsilon_g^{-3}\epsilon_H^{3},\epsilon_g^{-3/2},\epsilon_H^{-3}
	\right\}\right).
\]
}

\subsection{Evaluation/Inner Iteration Complexity}
\label{subsec:wccexactevals}

We now discuss the function evaluation complexity of
Algorithm~\ref{algo:sorn}, which counts the number of function calls
required by the algorithm before its termination conditions are
satisfied. We need to refine the iteration complexity analysis of
Section~\ref{subsec:wccexactits} to take into account the function
evaluations associated with the backtracking line-search process.

\begin{theorem} \label{theo:evalwcc}
	Suppose that Assumptions~\ref{assum:compactlevelset} and~\ref{assum:fC22} 
	hold. The number of function evaluations required by 
	Algorithm~\ref{algo:sorn} prior to reaching a point that satisfies 
	\eqref{eq:optwcc} is at most
	\refer{
	\begin{equation} \label{eq:evalwcc}
		\left[1+\mathcal{K} + \log_{\theta}\left(\min\{\epsilon_H^2,
		\epsilon_g^{1/2}\epsilon_H^{-1}\}\right) \right]\mathcal{C}\max
		\left\{ \epsilon_g^{-3}\epsilon_H^{3},\epsilon_g^{-3/2},
		\epsilon_H^{-3}\right\},
	\end{equation}
	}
	where 
	\refer{
	\begin{equation*}
		\mathcal{K} := \left[ \log_{\theta}\left(\min\left\{\frac{3}{L_H+\eta},
		\frac{5}{3},\frac{1}{(L_H+\eta)^{1/2}},
		\sqrt{\frac{3}{(L_H+\eta)U_g}},\frac{6}{(L_H+\eta)U_g}\right\}\right) 
		\right]_+
	\end{equation*}
	}
	and $\mathcal{C}$ is defined as in Theorem~\ref{theo:itwcc}.
\end{theorem}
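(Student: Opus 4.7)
The plan is to multiply the iteration bound from Theorem~\ref{theo:itwcc} by a uniform per-iteration cost for the backtracking line search at Step LS. Each outer iteration evaluates $f$ at $x_k + \theta^j d_k$ for $j = 0, 1, \ldots, j_k$, producing $j_k+1$ function evaluations, so the total count is $1 + \sum_{k=0}^{N-1}(j_k+1)$, where $N$ is the number of outer iterations and the leading $1$ accounts for the initial evaluation $f(x_0)$. It therefore suffices to bound $\max_k j_k$ uniformly and to invoke Theorem~\ref{theo:itwcc}.

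To bound $j_k$, I would enumerate the five direction choices of Table~\ref{tab:stepsalgo} and apply Lemmas~\ref{lemma:eigsteplength}--\ref{lemma:regnewtsteplength}, each of which provides an explicit upper bound of the form $j_* + 1$ with $j_* \in \{j_e, j_g, j_n, j_r\}$. The critical observation is that $\log_\theta$ is monotonically decreasing on $(0,1)$, so each of the four bounds cleanly separates into a purely problem-constant part and an $(\epsg,\epsH)$-dependent part. The constant parts correspond respectively to $3/(L_H+\eta)$, $\min\{5/3,(L_H+\eta)^{-1/2}\}$, $\sqrt{3/((L_H+\eta)U_g)}$, and $6/((L_H+\eta)U_g)$; their combined contribution is exactly $\mathcal{K}$ after taking a common minimum inside $\log_\theta$ and applying $[\,\cdot\,]_+$. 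The $\epsilon$-dependent contributions are $0$ (from $j_e$), $\log_\theta(\min\{\epsg^{1/2}\epsH^{-1},1\})$ (from $j_g$), $\log_\theta(\epsH)$ (from $j_n$), and $\log_\theta(\epsH^2)$ (from $j_r$).

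Taking the maximum over the four lemma bounds is equivalent, by monotonicity of $\log_\theta$, to taking the minimum inside the logarithm. Using $\epsg,\epsH \in (0,1)$ gives $\epsH^2 \leq \epsH \leq 1$, which collapses the redundant terms and shows the maximum $\epsilon$-dependent contribution equals $\log_\theta\left(\min\{\epsH^2,\epsg^{1/2}\epsH^{-1}\}\right)$. Combining this with $\mathcal{K}$ and with the constant $1$ arising from the ``$+1$'' in each lemma bound plus the initial evaluation, the per-iteration cost is at most $1 + \mathcal{K} + \log_\theta(\min\{\epsH^2,\epsg^{1/2}\epsH^{-1}\})$.

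The final step is to multiply this per-iteration bound by the iteration count $\mathcal{C}\max\{\epsg^{-3}\epsH^{3},\epsg^{-3/2},\epsH^{-3}\}$ given by Theorem~\ref{theo:itwcc}, yielding exactly \eqref{eq:evalwcc}. The calculation is largely bookkeeping; the main subtlety—and the only place where one must be careful—is correctly identifying which of the four $\epsilon$-dependent terms dominates under the assumption $\epsg,\epsH \in (0,1)$, and verifying that the $\min$ arising from $j_g$ does not introduce terms beyond the stated $\min\{\epsH^2,\epsg^{1/2}\epsH^{-1}\}$. This is handled by the case analysis already indicated above.
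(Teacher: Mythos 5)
Your proposal is correct and follows essentially the same route as the paper: multiply the iteration bound of Theorem~\ref{theo:itwcc} by a uniform per-iteration evaluation cost obtained from $\max\{j_e,j_g,j_n,j_r\}$, then use the monotonicity and additivity of $\log_\theta$ together with $\epsilon_g,\epsilon_H\in(0,1)$ to split that maximum into the constant $\mathcal{K}$ and the dominant $\epsilon$-dependent term $\log_\theta(\min\{\epsilon_H^2,\epsilon_g^{1/2}\epsilon_H^{-1}\})$. Your accounting of the ``$+1$''s (whether the per-iteration cost is $j_k+1$ with $j_k\le j_*+1$, versus the stated $1+j_*$) is slightly more explicit than, but no less loose than, the paper's own, so this does not constitute a gap relative to the published argument.
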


\begin{proof}
	Theorem~\ref{theo:itwcc} gives a bound on the number of iterations. 
	By Lemmas~\ref{lemma:eigsteplength}--\ref{lemma:regnewtsteplength}, a 
	bound on the corresponding number of function evaluations is
	\begin{equation*}
		\left(1 + \max\left\{j_e,j_g,j_n,j_r\right\}\right)
		\mathcal{C}\max\left\{ \epsilon_g^{-3}\epsilon_H^{3},
		\epsilon_g^{-3/2},\epsilon_H^{-3}\right\}.
	\end{equation*}
	Using the definitions of $j_e$, $j_g$, $j_n$, and $j_r$ from
	Lemmas~\ref{lemma:eigsteplength}, \ref{lemma:gradsteplength},
	\ref{lemma:newtsteplength}, and \ref{lemma:regnewtsteplength}, respectively, 
	and the fact that $\epsilon_g,\epsilon_H \in (0,1)$ yields the result.
\end{proof}

\refer{ With our specific choices of $\epsilon_g$ and
  $\epsilon_H$ mentioned in the previous section, the evaluation
  complexity bounds are
  $\mathcal{O}\left(\log(\frac{1}{\epsilon})\epsilon^{-3/2}\right)$
  and $\mathcal{O}\left(\log(\frac{1}{\epsilon})\epsilon^{-3}\right)$,
  respectively.  We can also derive a bound that includes dependencies
  on problem constants; for instance, the bound corresponding to
  $\epsg=\epsH=\eps$ is
\[
	\mathcal{O}\left(\log\left(\tfrac{\max\{L_H,U_g,U_H\}}{\epsilon}\right)\,
	(f(x_0)-\flow)L_H^{-3}\epsilon^{-3}\right).
\]
}

\subsection{Local Convergence}
\label{subsec:localcv}

In the previous sections, we have derived global complexity guarantees
for Algorithm~\ref{algo:sorn}. We now aim to show rapid local
convergence for the variant of the algorithm that invokes the Local
Phase, Algorithm~\ref{algo:localphase}, rather than terminating as
soon as the conditions \eqref{eq:optwcc} are satisfied.  We note that
local convergence results like the one we prove here have in the past
gone hand-in-hand with global convergence results in smooth nonconvex
optimization (see for example \cite{JNocedal_SJWright_2006}).  More
recently, several works in the optimization literature have 
established rapid local convergence alongside global complexity guarantees
\cite{EGBirgin_JMMartinez_2017,
  CCartis_NIMGould_PhLToint_2011a,FECurtis_DPRobinson_MSamadi_2017a}.

For this section, we will make the following additional assumption.

\begin{assumption} \label{assum:cvlocalmin}
	The sequence of iterates generated by
        Algorithm~\ref{algo:sorn} in conjunction with
        Algorithm~\ref{algo:localphase} converges to a local
        minimizer, that is, a point $x^*$ at which $\nabla f(x^*) = 0$
        and $\nabla^2 f(x^*) \succ 0$.
\end{assumption}

Under this assumption, the following result is immediate.

\begin{lemma} \label{lemma:strongcvxlocal}
Under Assumptions~\ref{assum:compactlevelset}, \ref{assum:fC22}
and~\ref{assum:cvlocalmin}, there exists $k_0 \in \mathbb{N}$ such
that for every $k \ge k_0$, we have for $\mu := \tfrac12 \min \left(
1, \lambda_{\min}\left(\nabla^2 f(x^*)\right) \right) > 0$
that
\begin{equation} \label{eq:strongcvxlocal}
		\mu I \; \preceq \; \nabla^2 f(x_k) \; \preceq \; U_H I,
\end{equation}
	and
\begin{equation} \label{eq:gradsmalllocal}
\|g_k\| < \min\left\{\frac{3 \mu^4}{L_H+\eta}, \epsilon_g\right\}.
\end{equation}
\end{lemma}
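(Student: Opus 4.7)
The plan is to invoke continuity of $\nabla f$ and $\nabla^2 f$ (Assumption~\ref{assum:fC22}) together with the convergence $x_k \to x^*$ guaranteed by Assumption~\ref{assum:cvlocalmin}, then to derive each of the three inequalities separately and take $k_0$ to be the maximum of the indices obtained, so that all three conditions hold simultaneously from that index onward.

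For the lower bound in \eqref{eq:strongcvxlocal}, I would exploit the fact that $\lambda_{\min}(\cdot)$ is continuous on the space of symmetric matrices. Composing with the continuous map $x \mapsto \nabla^2 f(x)$ yields $\lambda_{\min}(\nabla^2 f(x_k)) \to \lambda_{\min}(\nabla^2 f(x^*))$. Since $\lambda_{\min}(\nabla^2 f(x^*)) > 0$ by Assumption~\ref{assum:cvlocalmin}, and $\mu = \tfrac{1}{2}\min\{1, \lambda_{\min}(\nabla^2 f(x^*))\}$ satisfies $\mu < \lambda_{\min}(\nabla^2 f(x^*))$, there exists $k_1$ such that $\lambda_{\min}(\nabla^2 f(x_k)) \ge \mu$ for all $k \ge k_1$.

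For the upper bound in \eqref{eq:strongcvxlocal}, the argument is that the sufficient decrease condition~\eqref{eq:lsdecrease} enforced at every iteration of both Algorithm~\ref{algo:sorn} and Algorithm~\ref{algo:localphase} implies $f(x_k) \le f(x_0)$ for all $k$, so the iterates remain in $\mathcal{L}_f(x_0)$. The bound $\|\nabla^2 f(x_k)\| \le U_H$ from \eqref{eq:boundscompact} then applies, yielding $\nabla^2 f(x_k) \preceq U_H I$ for every $k$.

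For \eqref{eq:gradsmalllocal}, continuity of $\nabla f$ together with $\nabla f(x^*) = 0$ gives $\|g_k\| = \|\nabla f(x_k)\| \to 0$, so there exists $k_2$ such that $\|g_k\| < \min\{3\mu^4/(L_H + \eta), \epsilon_g\}$ for all $k \ge k_2$. Taking $k_0 = \max\{k_1, k_2\}$ completes the proof. No serious obstacle arises here — as the paper itself signals, the statement is immediate once convergence to a nondegenerate local minimizer is assumed; the only care needed is to name the continuity arguments precisely and to observe that iterates stay in the level set so that the uniform bound $U_H$ remains valid.
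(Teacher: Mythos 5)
Your proposal is correct and fills in exactly the standard continuity argument that the paper leaves implicit (the paper simply declares the lemma ``immediate'' after stating Assumption~\ref{assum:cvlocalmin} and gives no written proof). The three pieces — continuity of $\lambda_{\min}(\nabla^2 f(\cdot))$ along $x_k \to x^*$, the level-set argument for the uniform bound $U_H$, and $\nabla f(x_k) \to \nabla f(x^*) = 0$ — are each sound, and taking $k_0$ as the maximum of the resulting indices is the right way to combine them.
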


Note that the conditions on $k_0$ in Lemma~\ref{lemma:strongcvxlocal}
are such that the combined strategy of
Algorithm~\ref{algo:sorn}-Algorithm~\ref{algo:localphase} will have
entered the Local Phase (Algorithm~\ref{algo:localphase}) before
iteration $k_0$, and will stay in this phase at all subsequent
iterations.

We now establish a local quadratic convergence result.

\begin{theorem} \label{theo:unitsteptaken}
Suppose that Assumptions~\ref{assum:compactlevelset},
\ref{assum:fC22}, and~\ref{assum:cvlocalmin} are satisfied, and let
$\mu$ and $k_0$ be as defined in
Lemma~\ref{lemma:strongcvxlocal}. Then for every $k \ge k_0$, the
method always takes the Newton direction with a unit step length, and we 
have
\begin{equation} \label{theo:newgradlocal}
		\|g_{k+1}\| \leq \frac{L_H}{2\mu^2}\|g_k\|^2  \le \frac38 \| g_k \|.
\end{equation}
\end{theorem}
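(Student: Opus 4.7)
The proof will proceed in three steps: establish that the Newton branch of Algorithm~\ref{algo:localphase} is activated at every iteration $k \ge k_0$, verify that the unit step length is accepted by the backtracking line search, and then derive the usual quadratic convergence estimate together with the constant $3/8$.

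First I would observe that, by Lemma~\ref{lemma:strongcvxlocal}, for every $k \ge k_0$ we have $\nabla^2 f(x_k) \succeq \mu I$ with $\mu > 0$, so in particular $\lambda_k \ge \mu > 0 > -\epsH$, and moreover $\|g_k\| < \epsg$. Consequently the guard conditions at the top of Algorithm~\ref{algo:localphase} keep the iteration inside the local phase, and the internal branching selects the pure Newton direction $d_k = d^n_k$ defined by $\nabla^2 f(x_k) d_k = -g_k$. Positive definiteness of $\nabla^2 f(x_k)$ also yields the useful norm estimate $\|d_k\| \le \|g_k\|/\mu$.

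Next I would show that the unit step $\alpha_k = 1$ satisfies \eqref{eq:lsdecrease}, following the template of the proof of Lemma~\ref{lemma:newtsteplength}. Using \eqref{eq:LH} together with $g_k = -\nabla^2 f(x_k) d_k$, a routine rearrangement gives
\begin{equation*}
f(x_k+d_k) - f(x_k) \; \le \; -\tfrac{1}{2}\, d_k^\top \nabla^2 f(x_k) d_k + \tfrac{L_H}{6}\|d_k\|^3 \; \le \; -\tfrac{\mu}{2}\|d_k\|^2 + \tfrac{L_H}{6}\|d_k\|^3,
\end{equation*}
so the unit step satisfies the decrease requirement whenever $\|d_k\| < \tfrac{3\mu}{L_H+\eta}$. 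Combining $\|d_k\| \le \|g_k\|/\mu$ with the bound $\|g_k\| < 3\mu^4/(L_H+\eta)$ from Lemma~\ref{lemma:strongcvxlocal}, and noting that $\mu \le 1/2 < 1$, yields exactly this inequality.

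Finally, from $x_{k+1} = x_k + d_k$, the fundamental theorem of calculus, and the Lipschitz continuity of $\nabla^2 f$, the standard identity
\begin{equation*}
\|g_{k+1}\| \; = \; \bigl\|\nabla f(x_k + d_k) - \nabla f(x_k) - \nabla^2 f(x_k) d_k\bigr\| \; \le \; \tfrac{L_H}{2}\|d_k\|^2 \; \le \; \tfrac{L_H}{2\mu^2}\|g_k\|^2
\end{equation*}
produces the quadratic bound in \eqref{theo:newgradlocal}. To derive the secondary estimate $\tfrac{L_H}{2\mu^2}\|g_k\|^2 \le \tfrac{3}{8}\|g_k\|$, I would use $\|g_k\| < 3\mu^4/(L_H+\eta)$ once more, noting that $4L_H\mu^2 \le L_H \le L_H+\eta$ since $\mu \le 1/2$, which rearranges to $3\mu^4/(L_H+\eta) \le 3\mu^2/(4L_H)$.

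There is no genuine obstacle here; the only subtle point is the arithmetic bookkeeping around the explicit threshold $3\mu^4/(L_H+\eta)$ in Lemma~\ref{lemma:strongcvxlocal}, whose specific power of $\mu$ is tailored to serve both the acceptance of the unit step and the $3/8$ contraction factor. The rest is the classical Newton-convergence calculation specialized to a nondegenerate minimizer.
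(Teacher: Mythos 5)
Your proposal is correct and follows essentially the same route as the paper's proof: invoke Lemma~\ref{lemma:strongcvxlocal} to get the Newton branch and the bound $\|d_k\|\le\|g_k\|/\mu$, verify acceptance of the unit step via the Taylor/Lipschitz estimate and the threshold $\|g_k\|<3\mu^4/(L_H+\eta)$, and conclude with the standard quadratic Newton estimate plus $\mu\le 1/2$ for the $3/8$ factor. The only (harmless, arguably cleaner) variation is that you bound the decrease by $-\tfrac{\mu}{2}\|d_k\|^2+\tfrac{L_H}{6}\|d_k\|^3$ in terms of $\|d_k\|$, whereas the paper writes $-\tfrac{\mu}{2}\|g_k\|^2+\tfrac{L_H}{6}\|d_k\|^3$; both yield the same contradiction with \eqref{eq:gradsmalllocal}.
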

\begin{proof}
Let $k \geq k_0$, so that we are in the Local Phase
(Algorithm~\ref{algo:localphase}) at iteration $k$. By
Lemma~\ref{lemma:strongcvxlocal}, the Hessian at $\nabla^2 f(x_k)$ is
positive definite, with smallest eigenvalue bounded below by
$\mu>0$. Thus Algorithm~\ref{algo:localphase} computes the Newton
direction $d_k = d^n_k$, and we have
\[
\|d_k\| \le \|g_k\|/ \mu, \quad 	g_k^\top d_k \le -\mu \|g_k\|^2.
\]
We thus have
\begin{align*}
	f(x_k+d_k) - f(x_k) &\leq g_k^\top d_k + \frac12
	d_k^\top \nabla^2 f(x_k) d_k + \frac{L_H}{6}\|d_k\|^3\\ 
	&= \frac12 g_k^\top d_k + \frac{L_H}{6}\|d_k\|^3
	\leq -\frac{\mu}{2}\|g_k\|^2+\frac{L_H}{6}\|d_k\|^3.
\end{align*}
Thus if the sufficient decrease condition $f(x_k+d_k)-f(x_k) \le
-\frac{\eta}{6}\|d_k\|^3$ is {\em not} satisfied for the unit step, we
must have
\begin{equation*}
	\frac{L_H+\eta}{6}\|d_k\|^3 \geq \frac{\mu}{2}\|g_k\|^2,
\end{equation*}
which by the bound $\| d_k \| \le \| g_k \|/\mu$ can be true only if
\[
	\frac{L_H+\eta}{6} \frac{\| g_k \|^3}{\mu^3} \ge \frac{\mu}{2}\|g_k\|^2 \;\;
	\Leftrightarrow \;\; \| g_k \| \ge \frac{3 \mu^4}{L_H+\eta},
\]
which contradicts \eqref{eq:gradsmalllocal}. Thus the unit Newton step
is taken, and we have 
\begin{align*}
	\|g_{k+1}\| = \left\| \nabla f(x_k+d_k)\right\| 
	&= \left\| \nabla f(x_k+d_k) -\nabla f(x_k)-\nabla^2 f(x_k) d_k \right\| \\
	& \le \frac{L_H}{2}\|d_k\|^2 \\
	& \le \frac{L_H}{2\mu^2}\|g_k\|^2 \\
	& < \frac{L_H}{2\mu^2} \frac{3 \mu^4}{L_H+\eta} \|g_k \| \\
	& \le \frac32 \mu^2 \| g_k \| \le \frac38 \|g_k\|,
\end{align*}
completing the proof.
\end{proof}

\section{A Variant with Inexact Directions}
\label{sec:algoinexact}

In Section~\ref{sec:algoexact}, we have assumed that certain
linear-algebra operations in Algorithm~\ref{algo:sorn} --- the linear
system solves of \eqref{eq:fullnewton} and \eqref{eq:regnewton} and
the eigenvalue / eigenvector computation of \eqref{eq:eigenvec} ---
are performed exactly.  In a large-scale setting, the cost of these
operations can be prohibitive, so iterative techniques that perform
these operations {\em inexactly} are of interest. In this section, we
describe inexact methods for these key operations, and examine their
consequences for the complexity analysis.

\subsection{Inexact Eigenvector Calculation: Randomized Lanczos Method}
\label{subsec:inexacteig}

The problem of finding the minimum eigenvalue of the matrix in
\eqref{eq:eigenvec} and its associated eigenvector can be reformulated
as one of finding the maximum eigenvalue and eigenvector of a positive
semidefinite matrix. The Lanczos algorithm with
a random starting vector is an appealing option for the latter
problem, yielding an $\epsilon$-approximate eigenvector in
$\mathcal{O}\left(\log(n/\delta)\epsilon^{-1/2}\right)$ iterations,
with probability at least
$1-\delta$~\cite{JKuczynski_HWozniakowski_1992}. This fact has been
used in several methods that achieve fast convergence
rates~\cite{NAgarwal_ZAllenZhu_BBullins_EHazan_TMa_2017,
YCarmon_JCDuchi_2017,YCarmon_JCDuchi_OHinder_ASidford_2017}. In order to apply 
this method to a matrix that is not positive definite, one must make use of a 
bound on the Hessian norm. For sake of completeness, we spell out the 
procedure in the following lemma. 

\begin{lemma} \label{lemma:Lanczosfixedproba}
Let $H$ be a symmetric matrix satisfying $\|H\| \le M$ for some
$M>0$. Suppose that the Lanczos procedure is applied to find the
largest eigenvalue of $M I -H$ starting at a random vector uniformly
distributed over the unit sphere.  Then, for any $\varepsilon>0$ and
$\delta \in (0,1)$, there is a probability at least $1-\delta$ that the 
procedure outputs a unit vector $v$ such that
\begin{equation} \label{eq:approxeigvec}
	v^\top H v \le \lambda_{\min}(H) + \varepsilon
\end{equation}
in at most
\begin{equation} \label{eq:itsLanczos}
	\min\left\{n,\frac{\ln(n/\delta^2)}{2\sqrt{2}}\sqrt{\frac{M}{\varepsilon}}
	\right\} \quad \mbox{iterations.}
\end{equation}

After at most $n$ iterations, the procedure obtains a unit vector $v$ such
that $v^\top Hv = \lambdamin(H)$, with probability
$1$.
\end{lemma}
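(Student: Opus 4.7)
The plan is to reduce the problem to the well-known result of Kuczyński and Woźniakowski~\cite{JKuczynski_HWozniakowski_1992} for Lanczos applied to a positive semidefinite matrix. First, I would set $\tilde H := M I - H$ and observe that since $\|H\|\le M$, all eigenvalues of $H$ lie in $[-M,M]$, so $\tilde H\succeq 0$ and $\|\tilde H\| \le 2M$. Moreover, $\lambda_{\max}(\tilde H) = M - \lambda_{\min}(H)$, and any unit vector $v$ satisfies $v^\top\tilde H v = M - v^\top H v$, so the inequality \eqref{eq:approxeigvec} is equivalent to
\begin{equation*}
v^\top \tilde H v \; \ge \; \lambda_{\max}(\tilde H) - \varepsilon.
\end{equation*}
Since the Krylov subspaces of $H$ and $\tilde H$ generated from the same starting vector coincide, running the Lanczos procedure on $\tilde H$ as described is well-defined and produces identical Ritz vectors to those one would extract from the Krylov subspace for $H$.

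Next I would invoke the Kuczyński--Woźniakowski bound for the randomized Lanczos method on a PSD matrix, which states that with a uniformly random unit starting vector, the largest Ritz value $v^\top \tilde H v$ satisfies $\lambda_{\max}(\tilde H) - v^\top \tilde H v \le \varepsilon$ with probability at least $1-\delta$ after at most $\frac{\ln(n/\delta^2)}{4}\sqrt{\|\tilde H\|/\varepsilon}$ iterations. Substituting $\|\tilde H\|\le 2M$ yields the bound $\frac{\ln(n/\delta^2)}{2\sqrt{2}}\sqrt{M/\varepsilon}$ appearing in \eqref{eq:itsLanczos}. The minimum with $n$ simply reflects the fact that the Lanczos procedure in exact arithmetic cannot run for more than $n$ steps.

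For the second assertion, I would use the standard deterministic property of the Lanczos procedure: after at most $n$ iterations, the Krylov subspace becomes invariant under $H$, so the Ritz pair is an exact eigenpair. With probability $1$, a uniformly random unit starting vector has a nonzero component along every eigenspace of $H$ (the union of the orthogonal complements of these eigenspaces has Lebesgue measure zero on the unit sphere). In particular, with probability $1$ it has a nonzero projection onto the eigenspace of $\lambda_{\min}(H)$, which guarantees that the Ritz vector of interest produced after at most $n$ steps attains $v^\top H v = \lambda_{\min}(H)$ exactly.

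The only mildly delicate point is ensuring that the constant $1/(2\sqrt{2})$ matches the form in which the Kuczyński--Woźniakowski bound is stated (the cited paper uses a factor of $1/4$ against $\sqrt{\|A\|/\varepsilon}$ for PSD $A$, which under $\|\tilde H\|\le 2M$ becomes exactly $1/(2\sqrt{2})$); the rest of the reduction is routine.
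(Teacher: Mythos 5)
Your proposal is correct and follows essentially the same route as the paper: shift to $\tilde H = MI - H$, note $\tilde H \succeq 0$ with $\|\tilde H\| \le 2M$, invoke the Kuczy\'nski--Wo\'zniakowski bound (taking the relative tolerance to be $\varepsilon/(2M)$, which is exactly your $\sqrt{\|\tilde H\|/\varepsilon}$ reformulation), and translate back to $v^\top H v \le \lambda_{\min}(H)+\varepsilon$, with the constant $1/(2\sqrt{2})$ emerging just as you computed. The only cosmetic difference is that the paper handles the probability-one statement by citing Theorem 4.2(a) of the same reference rather than your (equally valid) measure-zero argument, and it notes explicitly that the positive-definite result extends to the semidefinite case by continuity.
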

\begin{proof}
By definition, the matrix $H'=M I - H$ it is a symmetric positive
semidefinite matrix with its spectrum lying in $[0,2M]$.  Applying the
Lanczos procedure to this matrix from a starting point drawn randomly
from the unit sphere yields a unit vector $v$ such that
\begin{equation} \label{eq:Lanczosvector}
		v^\top H' v \geq \left(1-\frac{\varepsilon}{2M} \right)\lambda_{\max}(H') 
		\geq \left(1-\frac{\varepsilon}{2M}\right)(M-\lambda_{\min}(H))
\end{equation}
in no more than $\min\left\{n,
\frac{\ln(n/\delta^2)}{4\sqrt{\varepsilon/(2M)}}\right\}$ iterations
with probability at least $1-\delta$. (This result  is from~\cite[Theorem
  4.2]{JKuczynski_HWozniakowski_1992} extended by a continuity
argument from the positive definite case to the positive semidefinite
case; see~\cite[Remark 7.5]{JKuczynski_HWozniakowski_1992}.)
Moreover, using~\eqref{eq:Lanczosvector}, we have
\begin{align*}
	v^\top H v &= -v^\top H' v + M \\
	&\le -\left(1-\frac{\varepsilon}{2M}\right)(M-\lambda_{\min}(H)) + M \\
	&= -M + \lambda_{\min}(H) + \frac{\varepsilon}{2} 
	-\frac{\varepsilon}{2M}\lambda_{\min}(H) + M \\
	&= \lambda_{\min}(H) + \frac{\varepsilon}{2} 
	-\frac{\varepsilon}{2M}\lambda_{\min}(H) \\
	&\le \lambda_{\min}(H) + \frac{\varepsilon}{2} 
	+\frac{\varepsilon}{2M}M \\
	&= \lambda_{\min}(H) + \varepsilon,
\end{align*}
as required. 
\end{proof}

Lemma~\ref{lemma:Lanczosfixedproba} admits the following variant, for
the case in which we fix the number of Lanczos iterations.

\begin{lemma} \label{lemma:Lanczosfixedits}
Let $H$ be a symmetric matrix with $\|H\| \le
M$. Suppose that $q$ iterations of the Lanczos procedure are applied
to find the largest eigenvalue of $M I -H$ starting at a random vector
uniformly distributed over the unit sphere.  Then for any
$\varepsilon>0$, the procedure outputs a unit vector $v$
such that $v^\top H v \le \lambda_{\min}(H) + \varepsilon$ with
probability at least
\begin{equation} \label{eq:probaLanczos}
		1-\delta \; = \; 1-
		\sqrt{n}\exp\left[-\sqrt{2}q \sqrt{\frac{\varepsilon}{M}} 
		\right].
\end{equation}	
\end{lemma}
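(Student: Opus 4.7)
The plan is to derive this result directly from Lemma~\ref{lemma:Lanczosfixedproba} by inverting the relationship between the iteration count and the failure probability. In the prior lemma, we were given a target failure probability $\delta$ and obtained an upper bound on the number of Lanczos iterations required, namely
\[
q(\delta) \;=\; \frac{\ln(n/\delta^2)}{2\sqrt{2}}\,\sqrt{M/\varepsilon}.
\]
Here we reverse the roles: we fix $q$ and ask for the probability guarantee that corresponds to this iteration count.

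The key computation is to solve $q = q(\delta)$ for $\delta$. Writing $2\sqrt{2}\,q\sqrt{\varepsilon/M} = \ln(n/\delta^2)$ and exponentiating gives
\[
\delta^2 \;=\; n\,\exp\!\left[-2\sqrt{2}\,q\sqrt{\varepsilon/M}\right],
\]
so that
\[
\delta \;=\; \sqrt{n}\,\exp\!\left[-\sqrt{2}\,q\sqrt{\varepsilon/M}\right],
\]
which is precisely the expression appearing in~\eqref{eq:probaLanczos}. Since Lemma~\ref{lemma:Lanczosfixedproba} guarantees that after $q(\delta)$ iterations we have $v^\top H v \le \lambda_{\min}(H) + \varepsilon$ with probability at least $1-\delta$, the same guarantee holds after the prescribed $q$ iterations with this value of $\delta$.

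I do not anticipate any real obstacle: this is essentially algebraic bookkeeping applied to a result already established. The one point worth flagging is that the expression for $\delta$ becomes vacuous (i.e.\ exceeds one) when $q$ is small compared to $\sqrt{M/\varepsilon}$; in that regime the bound gives no useful information, which is consistent with the fact that the underlying iteration complexity result of~\cite{JKuczynski_HWozniakowski_1992} requires sufficiently many iterations to deliver a nontrivial probability guarantee. Otherwise the derivation is a one-line inversion of the formula in Lemma~\ref{lemma:Lanczosfixedproba}.
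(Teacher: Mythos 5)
Your proposal is correct and matches the paper's (implicit) argument: the paper states Lemma~\ref{lemma:Lanczosfixedits} as an immediate variant of Lemma~\ref{lemma:Lanczosfixedproba} obtained by exactly the algebraic inversion you perform, solving $q = \frac{\ln(n/\delta^2)}{2\sqrt{2}}\sqrt{M/\varepsilon}$ for $\delta$. Your remark about the bound being vacuous for small $q$ is also consistent with the paper's own caveat that $1-\hat{K}\delta < 0$ renders the downstream results uninformative.
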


We point out that the choice $\delta=0$ (or, equivalently, $q=n$) is possible, 
that is, after $n$ iterations, the Lanczos procedure started with a random 
vector uniformly generated over the unit sphere returns an approximate 
eigenvector with probability 
one~\cite[Theorem 4.2 (a)]{JKuczynski_HWozniakowski_1992}.

\subsection{Inexact Newton and Regularized Newton Directions: Conjugate 
Gradient Method}
\label{subsec:inexactnewt}

Here we describe the use of the conjugate gradient (CG) algorithm to
solve the symmetric positive definite linear systems
~\eqref{eq:fullnewton} or~\eqref{eq:regnewton} --- the Newton and
regularized Newton equations, respectively. The conjugate gradient
method is the most popular iterative method for positive definite
linear systems, due to its rich convergence theory and strong
practical performance.  It has also been popular in the context of
nonconvex smooth minimization; see \cite{TSteihaug_1983}.  It requires only
matrix-vector operations involving the coefficient matrix (often these
can be found or approximated without explicit knowledge of the matrix)
together with some vector operations. It does not require knowledge or
estimation of the extreme eigenvalues of the matrix.

We apply CG to a system $Hd=-g$ where there are positive quantities
$m$ and $M$ such that $mI \preceq H \preceq MI$, so that the condition
number $\kappa$ of $H$ is bounded above by $M/m$.  Standard
convergence theory indicates that CG outputs a vector $d$
such that $\|H y + g\| \le \zeta \|g\|$ (for $\zeta \in (0,1)$) in
\begin{equation*}
	\mathcal{O}\left(\min\left\{n,
	\kappa^{1/2}\log(\kappa/\zeta)\right\} \right) \quad
	\mbox{iterations,}
\end{equation*}
with $\kappa$ being the condition number of $H$ (we obtain the result as a 
Corollary of Lemma~\ref{lemma:itsCGapprox} below). We use a different stopping 
criterion, namely
\begin{equation} \label{eq:stopcritCG}
	\left\| H d + g \right\| \le \frac12 \zeta \min\left\{ \|g\|,
	m\|d\|\right\}
\end{equation}
for some $\zeta \in (0,1)$.  This criterion is stronger than the one
typically used in truncated Newton-Krylov methods,  in that we require
the residual norm to be bounded by a multiple of the norm of the
approximate direction, as well as being bounded by a specified
fraction of the initial residual norm. The extra criterion resembles
the so-called \emph{s-condition} arising in cubic regularization
techniques, where the approximate minimizer $s_k$ of the cubic model
$m_k$ is required to satisfy
\begin{equation} \label{eq:cubicmodelcond}
	\left\| \nabla m_k(s_k) \right\| \le 
	\mathcal{O}\left(\|s_k\|^2\right).
\end{equation}
This property provides a lower bound on $\|s_k\|$, that is instrumental	
in obtaining the optimal complexity order of 
$\mathcal{O}(\epsilon_g^{-3/2})$ for first-order
convergence~\cite{CCartis_NIMGould_PhLToint_2011b}. Our condition 
replaces $\|s_k\|^2$ by $m\|d_k\|$, but serves a similar purpose.

The next lemma establishes a bound on the number of CG iterations needed 
to reach the desired accuracy.

\begin{lemma} \label{lemma:itsCGapprox}
Let $H d = -g$ be a linear system with $H$ symmetric and 
$m I \preceq H \preceq M I$, where $m \in (0,1)$, $M > 0$, and $\|g\| > 0$. 
Then the conjugate gradient algorithm computes a vector $d$ such 
that~\eqref{eq:stopcritCG} holds for some 
$\zeta \in (0,1)$ in at most 
\begin{equation} \label{eq:itsCGapprox}
	\min\left\{ n, \tfrac12 \sqrt{\kappa}
	\ln\left(4\refer{\kappa^{3/2}}/\zeta\right)\right\},
\end{equation}
iterations, where $\kappa = M/m$.
\end{lemma}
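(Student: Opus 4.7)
The plan is to start from the classical energy-norm convergence bound for conjugate gradient, then translate it into bounds on the residual $r_j := Hd_j+g$ and on $\|d_j\|$ separately, and finally combine these to derive the smallest iteration count at which both parts of \eqref{eq:stopcritCG} are simultaneously satisfied. Writing $d^\star := -H^{-1}g$ and $e_j := d_j-d^\star$, and using the fact that CG started from $d_0=0$ minimizes the $H$-norm of the error over a Krylov subspace, one has the well-known bound
\begin{equation*}
\|e_j\|_H \;\le\; 2\rho^j\,\|e_0\|_H, \qquad
\rho \;:=\; \frac{\sqrt{\kappa}-1}{\sqrt{\kappa}+1},
\end{equation*}
together with $\|e_0\|_H = \|d^\star\|_H$ and the spectral estimates $m I \preceq H \preceq M I$.

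Next I would convert this into two working inequalities. Using $r_j = H e_j$ and the spectral bounds, a short computation gives $\|r_j\| \le \sqrt{M}\|e_j\|_H$ and $\|e_0\|_H \le \|g\|/\sqrt{m}$, so that $\|r_j\| \le 2\sqrt{\kappa}\,\rho^j\|g\|$, which will handle the $\|g\|$ side of \eqref{eq:stopcritCG}. For the $\|d_j\|$ side, I would lower-bound $\|d_j\| \ge \|d^\star\|-\|e_j\|$ via the triangle inequality, using $\|d^\star\| \ge \|g\|/M$ together with $\|e_j\| \le \|e_j\|_H/\sqrt{m} \le 2\rho^j\|g\|/m$. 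Once $\rho^j$ is small enough (say $\rho^j \le 1/(4\kappa)$), this yields a clean lower bound $\|d_j\| \ge \|g\|/(2M)$, so that $\tfrac12\zeta m\|d_j\| \ge \zeta\|g\|/(4\kappa)$.

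The two requirements $\|r_j\| \le \tfrac12\zeta\|g\|$ and $\|r_j\| \le \tfrac12\zeta m\|d_j\|$ then both reduce to an inequality of the form $\rho^j \le C\zeta/\kappa^{3/2}$ for a harmless absolute constant $C$. To turn this into an explicit iteration count, I would invoke the sharp identity $-\ln \rho = \ln\!\bigl(\tfrac{1+1/\sqrt{\kappa}}{1-1/\sqrt{\kappa}}\bigr) = 2\sum_{k\ge 0}\tfrac{1}{(2k+1)\kappa^{k+1/2}}$, which immediately gives $-\ln\rho \ge 2/\sqrt{\kappa}$; solving the exponential bound then yields an iteration count of the form $\tfrac12\sqrt{\kappa}\,\ln(4\kappa^{3/2}/\zeta)$.

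The only remaining ingredient is the finite-termination property: in exact arithmetic, CG applied to an $n\times n$ system produces the exact solution $d^\star$ after at most $n$ steps, which trivially satisfies \eqref{eq:stopcritCG}. Taking the minimum of $n$ and the explicit bound above produces \eqref{eq:itsCGapprox}. The main delicate point, and the one I would spend the most care on, is the $\|d_j\|$ lower bound: it is essential that the ``radius of convergence'' in $d$-space forces the iterate to stay comparable to $\|d^\star\|$, since the second half of \eqref{eq:stopcritCG} is what distinguishes this stopping criterion from a conventional relative-residual rule and demands the extra $\kappa^{1/2}$ factor inside the logarithm.
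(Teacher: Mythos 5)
Your proposal is correct in its overall architecture and matches the paper's proof for the main conversion steps: both start from the classical $H$-norm error bound $\|e_q\|_H \le 2\rho^q\|e_0\|_H$ with $\rho = (\sqrt{\kappa}-1)/(\sqrt{\kappa}+1)$, pass to $\|Hd^{(q)}+g\| \le 2\sqrt{\kappa}\,\rho^q\|g\|$ via the spectral bounds, and use $-\ln\rho \ge 2/\sqrt{\kappa}$ to extract the iteration count. The one step where you genuinely diverge is the lower bound on $\|d^{(q)}\|$, which you correctly identify as the delicate point. The paper exploits the orthogonality of CG residuals: since $r^{(0)}=g$ and $(r^{(0)})^\top r^{(q)}=0$ for $q\ge 1$, one gets $\|d^{(q)}\| = \|H^{-1}(g-r^{(q)})\|/1 \ge \|g-r^{(q)}\|/M = \sqrt{\|g\|^2+\|r^{(q)}\|^2}/M \ge \|g\|/M$ \emph{unconditionally} for every $q\ge 1$. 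Your triangle-inequality argument $\|d_j\|\ge\|d^\star\|-\|e_j\|$ is valid but only delivers $\|d_j\|\ge\|g\|/(2M)$, and only after $\rho^j\le 1/(4\kappa)$; the factor-of-two loss propagates into the final threshold, giving $\rho^j \le \zeta/(8\kappa^{3/2})$ and hence an iteration count of $\tfrac12\sqrt{\kappa}\ln(8\kappa^{3/2}/\zeta)$ rather than the stated $\tfrac12\sqrt{\kappa}\ln(4\kappa^{3/2}/\zeta)$. So as written you prove the lemma with a marginally larger constant inside the logarithm (an additive $\tfrac12\sqrt{\kappa}\ln 2$ discrepancy, harmless for every use of the lemma downstream, but not literally the bound \eqref{eq:itsCGapprox}). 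If you want the exact constant, replace the perturbation argument by the residual-orthogonality observation; it is both sharper and simpler, since it removes the need to wait until $\rho^j$ is small.
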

\begin{proof}
Let $d^{(q)}$ be the iterate obtained at the $q$-th iteration of the 
conjugate gradient method applied to $Hd=-g$, with $d^{(0)}=0$. The classical 
bound on the behavior of the conjugate gradient 
residual~\cite[Section~5.1]{JNocedal_SJWright_2006} yields
\begin{equation} \label{eq:boundCGresAnorm}
	\left\|d^{(q)}+H^{-1}g\right\|_H \le 2\left(
	\frac{\sqrt{\kappa}-1}{\sqrt{\kappa}+1}\right)^q \|H^{-1}g\|_H,
\end{equation}		
where $\|x\|_H=\sqrt{x^\top H x}$. From this definition and the bounds on the 
spectrum of $H$, we have
\refer{
\begin{align*}
\| d^{(q)} + H^{-1} g \|_H^2 & = (d^{(q)}+H^{-1}g)^TH(d^{(q)}+H^{-1}g)  \\
& = (Hd^{(q)}+g)^TH^{-1} (Hd^{(q)}+g) \ge \frac{1}{M} \|Hd^{(q)}+g\|^2.
\end{align*}
as well as
\[
	\| H^{-1}g \|_H^2  = g^T H^{-1} g \le \frac{1}{m} \|g\|^2.
\]
}
By substituting these bounds into \eqref{eq:boundCGresAnorm}, we
obtain the following relation:
\begin{equation} \label{eq:boundCGres}
	\left\| H d^{(q)} + g \right\| \le 2 \refer{\kappa^{1/2}}\left(
	\frac{\sqrt{\kappa}-1}{\sqrt{\kappa}+1}\right)^q \|g\|.
\end{equation}
Thus, as long as our stopping criterion is not satisfied, we have
\begin{equation} \label{eq:contradCGres}
	\frac12 {\zeta}\min\left\{\|g\|,m\|d^{(q)}\|\right\} \le 
	2\refer{\kappa^{1/2}}
	\left(\frac{\sqrt{\kappa}-1}{\sqrt{\kappa}+1}\right)^q \|g\|.
\end{equation}
Furthermore, defining $r^{(q)} = H d^{(q)}+g$, we have
\begin{equation*}
	\|d^{(q)}\| = \|H^{-1}(-g+r^{(q)})\| \ge \frac{\|g-r^{(q)}\|}{M} 
	\ge  \frac{\sqrt{\| g\|^2  - 2g^T r^{(q)} + \| r^{(q)} \|^2}}{M} \ge
	\frac{\|g\|}{M},
\end{equation*}
for all $q \ge 1$, where we used the fact that using the facts that
$r^{(0)} = g$ and that in CG, the residuals are orthogonal:
$(r^{(i)})^Tr^{(j)}=0$ for $i \neq j$. Using this bound
within~\eqref{eq:contradCGres}, we obtain
\[
	\frac{\zeta}{2}\min\{1,{m}/{M}\}\|g\| \le 
	2 \refer{\kappa^{1/2}} \left(
	\frac{\sqrt{\kappa}-1}{\sqrt{\kappa}+1}\right)^q \|g\| \;\; \Leftrightarrow \;\;
	\frac{\zeta}{\refer{4\kappa^{3/2}}} \le \left(
	\frac{\sqrt{\kappa}-1}{\sqrt{\kappa}+1}\right)^q.
\]
By taking logarithms on both sides, we arrive at
\begin{equation*}
	q \le \frac{\ln(\zeta/(\refer{4\kappa^{3/2}}))}{\ln\left(
	\frac{\sqrt{\kappa}-1}{\sqrt{\kappa}+1}\right)} = 
	\frac{\ln(\refer{4\kappa^{3/2}}/\zeta)}{\ln\left(1 + 
	\frac{2}{\sqrt{\kappa}-1}\right)} \le \frac12 \sqrt{\kappa}
	\ln\left(\frac{\refer{4\kappa^{3/2}}}{\zeta}\right),
\end{equation*}
where the bound $\ln(1+\frac{1}{t}) \ge \frac{1}{t+1/2}$ was used to obtain 
the last inequality.
\end{proof}

\subsection{Complexity Analysis Based on Inexact Computations}
\label{subsec:wccinexact}

We present a variant of our main algorithm, specified as
Algorithm~\ref{algo:inexsorn}, in which computation of approximate
eigenvectors and linear system solves are performed inexactly by the
means described above. Algorithm~\ref{algo:inexsorn} requires two parameters 
not used in Algorithm~\ref{algo:sorn}: the upper bound \refer{$U_H$} on the 
Hessian norms, defined in \eqref{eq:boundscompact}, and a probability 
threshold $\delta$. As we expect only to recover inexact global complexity 
guarantees, the method does not exploit a local phase.

When Algorithm~\ref{algo:inexsorn} terminates, condition~\eqref{eq:optwcc} 
must hold.  At termination, we have $\min(\|g_k\|, \| g_{k+1} \|) \le \epsg$ 
and $\lambda^i_k \ge -\tfrac12 \epsH$. With high probability, $\lambda^i_k$ is 
within $\tfrac12 \epsH$ of $\lambdamin(\nabla^2 f(x_k))$, so we must have 
$\lambdamin(\nabla^2 f(x_k)) \ge -\epsH$, thus satisfying \eqref{eq:optwcc}.

\begin{algorithm}[H]
\caption{Inexact Second-Order Line Search Method}
\label{algo:inexsorn}
\begin{algorithmic}
\STATE \emph{Init.} Choose $x^0 \in \R^n$, $\theta \in (0,1)$, 
$\zeta,\delta \in [0,1)$, $\eta > 0$, $\epsg \in (0,1)$, $\epsH \in (0,1)$, 
$U_H>0$ satisfying \eqref{eq:boundscompact};
\FOR{$k=0,1,2,\dotsc$}
\STATE \textbf{Step 1. (First-Order)} Set
$g_k = \nabla f(x_k)$;
\IF{$\|g_k\|=0$}
\STATE Go to Step 2;
\ENDIF
\STATE Compute $R_k = \frac{g_k^\top \nabla^2 f(x_k) g_k}{\|g_k\|^2}$;
\IF{$R_k<-\epsH$}
\STATE{Set $d_k = \frac{R_k}{\|g_k\|}g_k$ and go to Step LS;}
\ELSIF{$R_k \in [-\epsH,\epsilon_H]$ and  $\|g_k\| > \epsilon_g$}
\STATE{Set $d_k = -\frac{g_k}{\|g_k\|^{1/2}}$ and go to Step LS;}
\ELSE
\STATE{Go to Step 2;}
\ENDIF
\STATE \textbf{Step 2. (Inexact Second-Order)} Compute an inexact 
eigenvector $v^i_k$ such that (with probability $1-\delta$)
\begin{equation} \label{eq:inexeigenvec}
	[v^i_k]^\top g_k \le 0,\quad \|v^i_k\|=|\lambda^i_k|,\quad \lambda_k^i \le 
	\lambda_{\min}\left(\nabla^2 f(x_k)\right) + \frac{\epsilon_H}{2},
\end{equation}
where $\lambda_k^i = [v^i_k]^\top \nabla^2 f(x_k) v^i_k/\|v^i_k\|^2$;
\IF{$\| g_k \| \le \epsg$ and $\lambda^i_k \ge -\tfrac12 \epsH$}
\STATE{\textbf{Terminate};}
\ELSIF{$\lambda^i_k < -\tfrac12 \epsH$}
\STATE{\textbf{(Negative Curvature)} Set $d_k=v^i_k$;}
\ELSIF{$\lambda^i_k > \tfrac32 \epsH$} 
\STATE \textbf{(Inexact Newton)} Use conjugate gradient to calculate 
$d_k=d^{in}_k$, where
\begin{equation} \label{eq:approxnewtonCG}
\| \nabla^2 f(x_k)d^{in}_k+g_k \| \leq 
\frac{\zeta}{2}\min\left\{\|g_k\|,\epsilon_H \|d^{in}_k\| \right\};
\end{equation}
\ELSE
\STATE  \textbf{(Inexact regularized Newton)} Use conjugate gradient to 
calculate $d_k=d^{ir}_k$, where
\begin{equation} \label{eq:approxregnewtonCG}
	\|(\nabla^2 f(x_k)+2\epsilon_H I)d^{ir}_k+g_k \| \leq 
	\frac{\zeta}{2}\min\left\{\|g_k\|,\epsilon_H \|d^{ir}_k\|\right\};
\end{equation}
\ENDIF
\STATE Go to Step LS;
\STATE \textbf{Step LS. (Line Search)} Compute a step length 
$\alpha_k=\theta^{j_k}$, where $j_k$ is the smallest nonnegative integer such 
that
\begin{equation} \label{eq:lsdecrease2}
	f(x_k + \alpha_k d_k) < f(x_k) - \frac{\eta}{6}\alpha_k^3 \|d_k\|^{3}
\end{equation}
holds, and set $x_{k+1} = x_k+\alpha_k d_k$;
\IF{$d_k=d^{in}_k$ or $d_k=d^{ir}_k$ and $\| \nabla f(x_{k+1}) \| \le \epsg$}
\STATE{\textbf{Terminate}};
\ENDIF
\ENDFOR
\end{algorithmic}
\end{algorithm}

\begin{table}[ht!]
	\begin{center}
	\begin{tabular}{|l|l|c||ll|}
		\hline
		\multicolumn{3}{|c||}{Context} &Direction &Decrease \\
		\hline
		$\|g_k\| = 0$ &- &$\lambda^i_k<-\frac{\epsH}{2}$ 	&$v^i_k$ 
		&Lemma~\ref{lemma:eigsteplength} \\ \hline
		& $R_k < -\epsH$ & & $R_k g_k/\|g_k\|$ & Lemma~\ref{lemma:eigsteplength}\\ 
		\hline
		$\| g_k \| > \epsg$ & $R_k \in [-\epsH,\epsH]$ & & $-g_k/\|g_k\|^{1/2}$ 
		& Lemma~\ref{lemma:gradsteplength}\\ \hline
		$\| g_k \| \le \epsg$ & $R_k \in [-\epsH,\epsH]$ 
		& $\lambda^i_k<-\tfrac12 {\epsH}$ & $v^i_k$ 
		& Lemma~\ref{lemma:eigsteplength}\\
		$\| g_k \| \le \epsg$ & $R_k \in [-\epsH,\epsH]$ 
		& $\lambda^i_k \in [-\tfrac12 {\epsH},\tfrac32 \epsH]$ & $d^{ir}_k$ 
		& Lemma~\ref{lemma:inxactregnewtsteplength}\\ 
		$\| g_k \| \le \epsg$ & $R_k \in [-\epsH,\epsH]$ 
		& $\lambda^i_k > \tfrac32 \epsH$ & $d^{in}_k$ 
		& Lemma~\ref{lemma:inxactnewtsteplength} \\ \hline
		$\| g_k \| > \epsg$ & $R_k > \epsH$ & $\lambda^i_k<-\tfrac12 {\epsH}$ 
		& $v^i_k$ & Lemma~\ref{lemma:eigsteplength}\\
		$\| g_k \| > \epsg$ & $R_k > \epsH$ 
		& $\lambda^i_k \in [-\tfrac12 {\epsH},\tfrac32 \epsH]$ & $d^{ir}_k$ 
		& Lemma~\ref{lemma:inxactregnewtsteplength}\\
		$\| g_k \| > \epsg$ & $R_k > \epsH$ & $\lambda^i_k >\tfrac32 \epsH$ 
		& $d^{in}_k$ & Lemma~\ref{lemma:inxactnewtsteplength}\\ \hline
	\end{tabular}
	\end{center}
	\vspace*{0.5ex}
	\label{tab:stepsinexalgo}
	\caption{Steps and associated decrease lemmas for 
	Algorithm~\ref{algo:inexsorn}.}
\end{table}

Table~\ref{tab:stepsinexalgo} shows a summary of the possible choices
for the search direction. It shows the same number of cases as 
Table~\ref{tab:stepsalgo}, with the context now determined by the
eigenvalue estimate $\lambda_k^i$, with one exception. There is an
extra row for the case $\| g_k \| \le \epsg$, $R_k \in
[-\epsH,\epsH]$, $\lambda^i_k > \tfrac32 \epsH$, because of possible
(but low-probability) failure of the randomized Lanczos process to
detect the smallest eigenvalue of $\nabla^2 f(x_k)$ to the required
accuracy.
Table~\ref{tab:stepsinexalgo} mentions two additional lemmas, that
respectively replace Lemmas~\ref{lemma:newtsteplength}
and~\ref{lemma:regnewtsteplength} in order to take inexactness into
account. We state and prove these results next.

\begin{lemma} \label{lemma:inxactnewtsteplength}
Let Assumptions~\ref{assum:compactlevelset} and~\ref{assum:fC22} hold.
Suppose that an inexact Newton direction $d_k=d^{in}_k$ is computed at
the $k$-th iteration of Algorithm~\ref{algo:inexsorn}. Then with
probability at least $1-\delta$, the backtracking line search
terminates with step length $\alpha_k = \theta^{j_k}$, with $j_k \le
\jin +1$, where
\begin{equation} \label{eq:inexactnewtlsits}	
	\jin  :=  \left[ \frac12 \log_{\theta}\left(\frac{3}{L_H+\eta}\,
	\frac{(1-\zeta)\epsilon_H^2}{U_g\sqrt{1+\zeta^2/4}}\right) \right]_+,
\end{equation}
and we have 
\refer{
\begin{equation} \label{eq:inexactnewtsteplength}
	f(x_k)-f(x_k+\alpha_k d_k) \; \geq \; \cin
	\min\left\{\|\nabla f(x_k+\alpha_k d_k)\|^3\epsH^{-3},
	\epsH^3\right\},
\end{equation}
where
\begin{equation*}
	\cin := \frac{\eta}{6}\min\left\{ 
	\left[ \frac{4}{\zeta+\sqrt{\zeta^2 + 8 L_H}} \right]^3,
	\left[ \frac{3\theta^2(1-\zeta)}{L_H+\eta}\right]^3\right\}.
\end{equation*}
}
\end{lemma}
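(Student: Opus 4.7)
The plan is to mirror the proof of Lemma~\ref{lemma:newtsteplength}, but carefully tracking the inexactness residual $r_k := \nabla^2 f(x_k) d_k + g_k$ which now satisfies $\|r_k\| \le \tfrac{\zeta}{2}\min\{\|g_k\|, \epsH \|d_k\|\}$ rather than vanishing. First, I would invoke the inexact eigenvector guarantee~\eqref{eq:inexeigenvec}: since the inexact Newton step is computed only when $\lambda_k^i > \tfrac32 \epsH$ and, with probability at least $1-\delta$, $\lambda_k^i \le \lambda_{\min}(\nabla^2 f(x_k)) + \tfrac12\epsH$, we obtain $\nabla^2 f(x_k) \succeq \epsH I$. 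This is the event on which the whole analysis is conditioned.

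Next, I would derive two bounds on $\|d_k\|$. For the upper bound, I would exploit the fact that in CG started from zero the final residual is orthogonal to the initial residual $g_k$, so $\|\nabla^2 f(x_k) d_k\|^2 = \|r_k - g_k\|^2 = \|r_k\|^2 + \|g_k\|^2 \le (1+\zeta^2/4)\|g_k\|^2$; together with $\nabla^2 f(x_k) \succeq \epsH I$ and $\|g_k\|\le U_g$ this yields $\|d_k\| \le \sqrt{1+\zeta^2/4}\,U_g/\epsH$, which explains the constant appearing inside $j_{in}$. For the lower bound, I would rewrite $g_k^\top d_k = r_k^\top d_k - d_k^\top \nabla^2 f(x_k)d_k$, so that
\[
\theta^j g_k^\top d_k + \tfrac{\theta^{2j}}{2}d_k^\top \nabla^2 f(x_k)d_k
\;=\; \theta^j r_k^\top d_k - \theta^j(1-\theta^j/2)d_k^\top \nabla^2 f(x_k)d_k
\;\le\; -\tfrac{\theta^j(1-\zeta)}{2}\epsH\|d_k\|^2
\]
for $\theta^j \in (0,1]$, using $\|r_k\|\le \tfrac{\zeta}{2}\epsH\|d_k\|$ and $d_k^\top \nabla^2 f(x_k)d_k \ge \epsH\|d_k\|^2$.

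For the unit-step branch, I would bound $\|\nabla f(x_k+d_k)\| \le \tfrac{L_H}{2}\|d_k\|^2 + \|r_k\| \le \tfrac{L_H}{2}\|d_k\|^2 + \tfrac{\zeta}{2}\epsH\|d_k\|$, treat this as a quadratic inequality in $\|d_k\|$, and solve (invoking Lemma~\ref{lem:T1} with $a=\zeta$ and $b=8L_H$, as in the proof of Lemma~\ref{lemma:regnewtsteplength}) to obtain $\|d_k\| \ge \frac{4}{\zeta+\sqrt{\zeta^2+8L_H}}\min\{\|\nabla f(x_k+d_k)\|/\epsH,\epsH\}$. Cubing and multiplying by $\eta/6$ gives the first term of $c_{in}$. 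For the backtracking branch, combining~\eqref{eq:LH} with the upper bound on $\theta^j g_k^\top d_k + \tfrac{\theta^{2j}}{2}d_k^\top \nabla^2 f(x_k)d_k$ displayed above yields, whenever $\theta^j$ fails the Armijo-type test, $\theta^{2j}\|d_k\| \ge \frac{3(1-\zeta)\epsH}{L_H+\eta}$. Substituting the upper bound $\|d_k\|\le U_g\sqrt{1+\zeta^2/4}/\epsH$ gives the contradiction for $j>j_{in}$, so $j_k\le j_{in}+1$; setting $j=0$ in the same inequality gives the lower bound $\|d_k\|\ge \frac{3(1-\zeta)\epsH}{L_H+\eta}$, while the failure of the test at $j=j_k-1$ gives $\theta^{j_k} \ge \theta\sqrt{\tfrac{3(1-\zeta)\epsH}{(L_H+\eta)\|d_k\|}}$. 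Substituting both into $\tfrac{\eta}{6}\theta^{3j_k}\|d_k\|^3$ produces the second term of $c_{in}$ multiplied by $\epsH^3$, and combining with the unit-step bound gives~\eqref{eq:inexactnewtsteplength}.

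The main obstacle will be the careful book-keeping of the residual terms in the two-part bound on $g_k^\top d_k + \tfrac{\theta^j}{2}d_k^\top \nabla^2 f(x_k)d_k$: one must exploit $\|r_k\|\le \tfrac{\zeta}{2}\epsH\|d_k\|$ (rather than the $\|g_k\|$ variant) to get a clean $(1-\zeta)$ factor that survives through the final decrease estimate. A secondary subtlety is justifying the tight $\sqrt{1+\zeta^2/4}$ constant in $j_{in}$, which requires the CG orthogonality property $r_k^\top g_k = 0$ rather than the cruder triangle inequality $\|\nabla^2 f(x_k)d_k\|\le (1+\zeta/2)\|g_k\|$.
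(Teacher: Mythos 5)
Your proposal is correct and follows essentially the same route as the paper's proof: conditioning on the Lanczos event to get $\nabla^2 f(x_k)\succeq \epsH I$, using CG residual orthogonality for the upper bound $\|d_k\|\le \sqrt{1+\zeta^2/4}\,U_g/\epsH$, applying Lemma~\ref{lem:T1} to the quadratic $\tfrac{L_H}{2}\|d_k\|^2+\tfrac{\zeta}{2}\epsH\|d_k\|\ge\|\nabla f(x_k+d_k)\|$ in the unit-step branch, and deriving $\theta^{2j}\ge \tfrac{3(1-\zeta)}{L_H+\eta}\epsH\|d_k\|^{-1}$ for failed backtracking steps (your choice of $a=\zeta$, $b=8L_H$ in Lemma~\ref{lem:T1} is just a rescaling of the paper's $a=\zeta\epsH/2$, $b=2L_H\epsH^2$). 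No gaps.
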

\begin{proof}
We observe first that when the Newton step is computed in
Algorithm~\ref{algo:inexsorn}, we have from~\eqref{eq:approxeigvec}
that
\begin{equation*}
	\frac{3\epsilon_H}{2} < \lambda_k^i \le 
	\lambda_{\min}\left(\nabla^2 f(x_k)\right) + \frac{\epsilon_H}{2}
	\; \Rightarrow \;
	\lambda_{\min}\left(\nabla^2 f(x_k)\right) \ge \epsilon_H,
\end{equation*}
with probability $1-\delta$. Suppose first that the step
length $\alpha_k=1$ satisfies the decrease
condition~\eqref{eq:lsdecrease2}. Then, defining 
\begin{equation} \label{eq:def.rk}
	r_k := \nabla^2 f(x_k)d_k + g_k
\end{equation} 
and using the inexactness criterion for the inexact
Newton step $d_k$, we find that the gradient at the next point
$x_k+d_k$ satisfies
\begin{align*}
	\left\| \nabla f(x_k+ d_k) \right\| &=
	\left\| \nabla f(x_k+d_k) - \nabla f(x_k) + \nabla f(x_k) \right\| \\
	&=\left\| \nabla f(x_k+d_k)-\nabla f(x_k)-\nabla^2 f(x_k)d_k + r_k\right\| \\
	&\leq \frac{L_H}{2}\|d_k\|^2 + \|r_k\| 
	\leq \frac{L_H}{2}\|d_k\|^2 + \frac{\zeta}{2}\epsilon_H\|d_k\|.
\end{align*}
\refer{
We obtain a lower bound on $\|d_k\|$ by taking the root of the above quadratic 
and applying Lemma~\ref{lem:T1} with $a=\zeta \epsH/2$, 
$b=2L_H \epsH^2$, and $t=\| \nabla f(x_k+d_k)\|/\epsH^2$ to obtain
\begin{align} 
	\|d_k\| &\ge  \frac{-\tfrac{\zeta}{2}\epsH + 
	\sqrt{\tfrac{\zeta^2}{4}\epsH^2+2L_H \|\nabla f(x_k+d_k)\|}}{L_H} \nonumber\\
	& \ge \frac{-\tfrac{\zeta}{2}\epsH + 
	\sqrt{\tfrac{\zeta^2}{4}\epsH^2+2L_H \epsH^2}}{L_H} \min 
	\left(\|\nabla f(x_k+d_k)\|/\epsH^2,1 \right) \nonumber\\
	& = \frac{-\zeta + \sqrt{\zeta^2+8L_H }}{2L_H} \min 
	\left(\|\nabla f(x_k+d_k)\|/\epsH,\epsH \right) \nonumber\\
	\label{eq:trinomialboundnorminexactdnk}
	&= \frac{4}{\zeta+\sqrt{\zeta^2+8 L_H}}
	\min\left( \|\nabla f(x_k+d_k)\|/\epsH,\epsH\right).
\end{align}
}
Therefore, taking the inexact Newton step with a unit step length guarantees
\refer{
\[
	f(x_k) - f(x_k+d_k) \geq \frac{\eta}{6}\|d_k\|^3 
	\geq \frac{\eta}{6}\left[\frac{4}{\zeta+\sqrt{\zeta^2+8 L_H}}\right]^3
	\min\left( \|\nabla f(x_k+d_k)\|^3\epsH^{-3},\epsH^3\right),
\]
}
so the inequality \eqref{eq:inexactnewtsteplength} is satisfied in the
case of a unit step $\alpha_k=1$. 
	
To complete the proof, consider the case in which the unit step length
does not lead to sufficient decrease. In that case, for any value $j
\geq 0$ such that \eqref{eq:lsdecrease2} is not satisfied, we have 
\begin{align*}
	-\frac{\eta}{6}\theta^{3j}\|d_k\|^3 &\leq 
	f(x_k+\theta^j d_k) -f(x_k) \\
	&\leq \theta^j g_k^\top d_k + \frac{\theta^{2j}}{2}
	d_k^\top \nabla^2 f(x_k) d_k 
	+ \frac{L_H}{6}\theta^{3j} \|d_k\|^3 \\
	&\leq \theta^j (-\nabla^2 f(x_k)d_k+r_k)^\top d_k + \frac{\theta^{2j}}{2}
	d_k^\top \nabla^2 f(x_k) d_k 
	+ \frac{L_H}{6}\theta^{3j} \|d_k\|^3 \\
	&= -\theta^j\left(1-\frac{\theta^j}{2}\right)d_k^\top \nabla^2 f(x_k) d_k  
	+\theta^j d_k^\top r_k + \frac{L_H}{6}\theta^{3j}\|d_k\|^3 \\
	&\leq - \frac{\theta^j}{2}\epsilon_H\|d_k\|^2 + \theta^j \|d_k\|\|r_k\| +
	\frac{L_H}{6}\theta^{3j}\|d_k\|^3 \\
	&\leq - \frac{\theta^j}{2}(1-\zeta)\epsilon_H \|d_k\|^2 + 
	\frac{L_H}{6}\theta^{3j}\|d_k\|^3.
\end{align*}
Thus, for any $j \geq 0$ for which sufficient decrease is not
obtained, we have
\begin{equation} \label{eq:boundnotunitinexnewtstep}
	\theta^{2j} \; \geq \; \frac{3}{L_H+\eta}(1-\zeta)
	\epsilon_H \|d_k\|^{-1}.
\end{equation}
In particular, since~\eqref{eq:boundnotunitinexnewtstep} holds
for $j=0$, we have
\begin{equation} \label{eq:lbnorminexnewtnotunit}
	\|d_k\| \; \ge \; \frac{3}{L_H+\eta}(1-\zeta) \epsH.
\end{equation}
By the definitions of $d_k$ and of $r_k$ in \eqref{eq:def.rk}, we also
have the following upper bound on its norm:
\begin{align*}
	\|d_k\| = \left\|\nabla^2 f(x_k)^{-1}\left(g_k-r_k\right)\right\| 
	\le \|\nabla^2 f(x_k)^{-1}\| \left\|g_k-r_k\right\| 
	&\le \frac{1}{\epsilon_H}\sqrt{\|g_k\|^2 + \|r_k\|^2} \\
	&\le \frac{\sqrt{1+\zeta^2/4}}{\epsilon_H}\|g_k\| \\
	&\le \frac{\sqrt{1+\zeta^2/4}}{\epsilon_H}U_g,
\end{align*}		
using again the fact that $g_k$ and $r_k$ are orthogonal (by the
properties of the CG algorithm), as well as the
criterion~\eqref{eq:approxnewtonCG} and the bound
\eqref{eq:boundscompact}.
	
Meanwhile, for any $j > \jin$, we have
\begin{align*}
	\theta^{2j} < \theta^{2 \jin} \le \frac{3}{L_H+\eta}(1-\zeta)
	\frac{\epsilon_H^2}{U_g\sqrt{1+\zeta^2/4}} 
	&\le \frac{3}{L_H+\eta} (1-\zeta)\epsilon_H 
	\frac{\epsilon_H}{U_g\sqrt{1+\zeta^2/4}} \\
	&\le \frac{3}{L_H+\eta} (1-\zeta)\epsilon_H \|d_k\|^{-1}.
\end{align*}
As a result, \eqref{eq:boundnotunitinexnewtstep} is violated for $j >
\jin$, which means that the line search must terminate with a step
length $\alpha_k = \theta^{j_k}$ satisfying \eqref{eq:lsdecrease2}, with 
$1 \le j_k \le \jin+1$. Since the index $j = j_k-1 \ge 0$ 
satisfies~\eqref{eq:boundnotunitinexnewtstep}, we have
\begin{equation} \label{eq:7yg}
	\theta^{j_k} \geq \sqrt{\frac{3\theta^2}{L_H+\eta}(1-\zeta)}
	\epsilon_H^{1/2} \|d_k\|^{-1/2},
\end{equation}
and from the sufficient decrease condition, we have
\begin{align*}
	f(x_k) - f(x_k+\theta^{j_k}d_k) \geq  \frac{\eta}{6}\theta^{3 j_k}
	\|d_k\|^3 & \ge  \frac{\eta}{6}
	\left[\frac{3\theta^2}{L_H+\eta}(1-\zeta)\right]^{3/2}\epsilon_H^{3/2} 
	\| d_k \|^{3/2} \\
	& \ge  \frac{\eta}{6}
	\left[\frac{3\theta^2}{L_H+\eta}(1-\zeta)\right]^3\epsilon_H^3,
\end{align*}
where the second inequality follows from \eqref{eq:7yg} and the third
inequality follows from \eqref{eq:lbnorminexnewtnotunit} (using the
fact that $\theta \in (0,1)$).  Hence, the claim
\eqref{eq:inexactnewtsteplength} is satisfied in the case of non-unit
step length $\alpha_k$ too, and the proof is complete.
\end{proof}

\begin{lemma} \label{lemma:inxactregnewtsteplength}
Let Assumptions~\ref{assum:compactlevelset} and~\ref{assum:fC22} hold.
Suppose that an inexact regularized Newton direction $d_k=d^{ir}_k$ is
computed at the $k$-th iteration of
Algorithm~\ref{algo:inexsorn}. Then with probability at least
$1-\delta$, the backtracking line search terminates with step length
$\alpha_k = \theta^{j_k}$, with $j_k \le \jir +1$, where
$\jir$ is defined as in~\eqref{eq:inexactnewtlsits}, 
and we have
\refer{
\begin{equation} \label{eq:inexactregnewtsteplength}
	f(x_k)-f(x_k+\alpha_k d_k) \; \geq \; \cir
	\min\left\{\|\nabla f(x_k+\alpha_k d_k)\|^3\epsH^{-3},
	\epsH^3\right\}.
\end{equation}
where 
\begin{equation*}
	\cir := \frac{\eta}{6}\min\left\{ 
	\left[\frac{4}{4+\zeta+\sqrt{(4+\zeta)^2+ 8 L_H}}\right]^3,
	\refer{	\left[\frac{3\theta^2(1-\zeta)}{L_H+\eta}\right]^3} \right\}.
\end{equation*}
}
\end{lemma}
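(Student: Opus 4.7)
My plan is to follow the template of Lemma~\ref{lemma:inxactnewtsteplength} closely, reusing its algebraic structure but adjusting for the additional $2\epsH$-regularization and the modified residual equation $(\nabla^2 f(x_k)+2\epsH I)d_k = -g_k + r_k$, where $\|r_k\|\le \tfrac{\zeta}{2}\min\{\|g_k\|,\epsH\|d_k\|\}$. The first preparatory observation is that because the branch producing $d^{ir}_k$ is entered when $\lambda_k^i\in[-\tfrac12\epsH,\tfrac32\epsH]$, the approximate-eigenvector guarantee \eqref{eq:inexeigenvec} gives $\lambda_{\min}(\nabla^2 f(x_k))\ge-\epsH$ with probability at least $1-\delta$, so the coefficient matrix satisfies $\nabla^2 f(x_k)+2\epsH I\succeq \epsH I$. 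This immediately yields the analog of \eqref{eq:upperboundnormdrk}, namely $\|d_k\|\le\sqrt{1+\zeta^2/4}\,U_g/\epsH$, using orthogonality of $g_k$ and $r_k$ from the CG process.

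For the unit-step case I would proceed as in Lemma~\ref{lemma:regnewtsteplength}, writing $\nabla f(x_k+d_k) = [\nabla f(x_k+d_k)-\nabla f(x_k)-\nabla^2 f(x_k)d_k] - 2\epsH d_k + r_k$ and applying Assumption~\ref{assum:fC22} together with $\|r_k\|\le\tfrac{\zeta}{2}\epsH\|d_k\|$ to obtain the quadratic inequality
\[
\tfrac{L_H}{2}\|d_k\|^2 + \tfrac{4+\zeta}{2}\epsH\|d_k\| \ge \|\nabla f(x_k+d_k)\|.
\]
Solving for $\|d_k\|$ and invoking Lemma~\ref{lem:T1} with $a=(4+\zeta)\epsH/2$, $b=2L_H\epsH^2$, and $t=\|\nabla f(x_k+d_k)\|/\epsH^2$ (the same trick used in the proofs of Lemmas~\ref{lemma:regnewtsteplength} and~\ref{lemma:inxactnewtsteplength}) yields $\|d_k\|\ge \frac{4}{4+\zeta+\sqrt{(4+\zeta)^2+8L_H}}\min(\|\nabla f(x_k+d_k)\|/\epsH,\epsH)$. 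Cubing and multiplying by $\eta/6$ produces the first term in $\cir$ and matches the right-hand side of \eqref{eq:inexactregnewtsteplength} in this branch.

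For the non-unit-step case I would proceed as in the exact regularized proof, using \eqref{eq:LH} and substituting $g_k = -\nabla^2 f(x_k)d_k -2\epsH d_k + r_k$ so that the $(\theta^j,\theta^{2j})$ cross-terms collect into
\[
\theta^j(1-\tfrac{\theta^j}{2})g_k^\top d_k - \epsH\theta^{2j}\|d_k\|^2 + \tfrac{\theta^{2j}}{2}r_k^\top d_k + \tfrac{L_H}{6}\theta^{3j}\|d_k\|^3.
\]
Since $(\nabla^2 f(x_k)+2\epsH I)\succeq \epsH I$ and $\|r_k\|\le\tfrac{\zeta}{2}\epsH\|d_k\|$, one checks $g_k^\top d_k \le -(1-\zeta/2)\epsH\|d_k\|^2 \le 0$. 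Using $1-\theta^j/2\ge 1/2$ to keep only the $\theta^j$ term, failure of \eqref{eq:lsdecrease2} gives
\[
\tfrac{L_H+\eta}{6}\theta^{3j}\|d_k\|^3 \;\ge\; \tfrac{\theta^j}{2}(1-\zeta/2)\epsH\|d_k\|^2,
\]
and after dividing by $\theta^j\|d_k\|$ and applying $1-\zeta/2\ge 1-\zeta$ and the upper bound on $\|d_k\|$, I recover precisely $\theta^{2j}\ge \frac{3(1-\zeta)\epsH^2}{(L_H+\eta)\sqrt{1+\zeta^2/4}\,U_g}$, so the same threshold $\jir=\jin$ works. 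The $j=0$ version of the same inequality produces the lower bound $\|d_k\|\ge \frac{3(1-\zeta)\epsH}{L_H+\eta}$, after which combining the lower bounds on $\theta^{j_k}$ (from $j_k-1$ failing) and on $\|d_k\|$ inside $\theta^{3j_k}\|d_k\|^3$ yields the second term of $\cir$.

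The main obstacle is not conceptual but bookkeeping: keeping the two negative contributions (from $g_k^\top d_k$ and from the explicit $-\epsH\theta^{2j}\|d_k\|^2$ term) balanced so the constants compress to $1-\zeta$ rather than $1-\zeta/4$ or $1-\zeta/2$, and managing the quadratic-inversion step via Lemma~\ref{lem:T1} so that the prefactor $4/(4+\zeta+\sqrt{(4+\zeta)^2+8L_H})$ emerges cleanly. Once these two points are handled, the combination of the unit-step and non-unit-step estimates under the minimum gives \eqref{eq:inexactregnewtsteplength} with the stated $\cir$.
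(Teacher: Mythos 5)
Your proposal is correct and follows essentially the same route as the paper's proof: the same probabilistic bound $\nabla^2 f(x_k)+2\epsH I\succeq\epsH I$, the same unit-step quadratic inversion via Lemma~\ref{lem:T1} with $a=(4+\zeta)\epsH/2$, and the same backtracking contradiction argument yielding $\theta^{2j}\ge \tfrac{3(1-\zeta)}{L_H+\eta}\epsH\|d_k\|^{-1}$ together with the bounds $\|d_k\|\ge\tfrac{3(1-\zeta)}{L_H+\eta}\epsH$ and $\|d_k\|\le\sqrt{1+\zeta^2/4}\,U_g/\epsH$. The only (immaterial) difference is the algebraic grouping in the non-unit-step case: you fold the residual into $g_k^\top d_k$ and bound $g_k^\top d_k\le-(1-\zeta/2)\epsH\|d_k\|^2$, whereas the paper keeps $d_k^\top[\nabla^2 f(x_k)+2\epsH I]d_k$ as the pivot and handles $\theta^j r_k^\top d_k$ separately; both collapse to the same $1-\zeta$ constant.
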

\begin{proof}
The inexact regularized Newton step is computed only when $-\tfrac12
\epsH \le \lambda^i_k \le \tfrac32 \epsH$, so from
\eqref{eq:approxeigvec} with $\varepsilon = \frac12 \epsH$, we have
\begin{equation} \label{eq:8gh}
	\lambda_{\min}(\nabla^2 f(x_k)) + 2 \epsilon_H \ge 
	\lambda^i_k - \tfrac12 \epsilon_H + 2\epsilon_H \ge \epsilon_H,
\end{equation}
with probability at least $1-\delta$.  Suppose first that the step
length $\alpha_k=1$ satisfies the decrease
condition~\eqref{eq:lsdecrease2}. Then, defining $r_k = (\nabla^2
f(x_k) + 2 \epsH I) d_k + g_k$, we
have that
\begin{align*}
	\left\| \nabla f(x_k+ d_k) \right\| &=
	\left\| \nabla f(x_k+d_k) - \nabla f(x_k) + \nabla f(x_k) \right\| \\
	&=\left\| \nabla f(x_k+d_k)-\nabla f(x_k)-\nabla^2 f(x_k)d_k 
	-2\epsilon_H d_k + r_k\right\| \\
	&\leq \frac{L_H}{2}\|d_k\|^2 + 2\epsilon_H \|d_k\| + \|r_k\| \\
	&\leq \frac{L_H}{2}\|d_k\|^2 + \frac{4+\zeta}{2}\epsilon_H\|d_k\|.
\end{align*}
\refer{Reasoning as in \eqref{eq:trinomialboundnorminexactdnk}, with 
$\tfrac{4+\zeta}{2}$ replacing $\tfrac{\zeta}{2}$, we obtain the following 
lower bound on $\|d_k\|$:
\begin{equation} \label{eq:trinomialboundnorminexactdrk}
	\|d_k\| \; \ge \; \frac{4}{4+\zeta+\sqrt{(4+\zeta)^2+8 L_H}}
	\min\left(\|\nabla f(x_k+d_k)\| \epsH^{-1},\epsH\right).
\end{equation}
}
Therefore, taking the unit regularized Newton step guarantees
\refer{
\begin{align*}
	f(x_k) - f(x_k+d_k) &\geq \frac{\eta}{6}\|d_k\|^3 \\
	&\geq \frac{\eta}{6}\left[\frac{4}{4+\zeta+\sqrt{(4+\zeta)^2+8 L_H}}\right]^3
	\min\left(\|\nabla f(x_k+d_k)\|^3 \epsH^{-3},\epsH^3\right),
\end{align*}
}
so the result of the theorem holds in the case in which the unit step
satisfies the sufficient decrease condition.
	
To complete the proof, we consider the case in which $\alpha_k<1$.  In
that case, for any value $j \geq 0$ such that \eqref{eq:lsdecrease2}
is not satisfied, we have from the definition of $r_k$, the bound on
$\|r_k\|$ in the definition of $d^{ir}_k$,  and \eqref{eq:8gh} that
\refer{
\begin{align*}
	-\frac{\eta}{6}\theta^{3j}\|d_k\|^3 &\le f(x_k+\theta^j d_k) -f(x_k) \\
	& \le \theta^j g_k^\top d_k + \frac{\theta^{2j}}{2} 
	d_k^\top \nabla^2 f(x_k) d_k 
	+ \frac{L_H}{6}\theta^{3j} \|d_k\|^3 \\
	& = -\theta^j \left[ \nabla^2 f(x_k) d_k + 2 \epsH d_k - r_k \right]^\top d_k 
	+ \frac{\theta^{2j}}{2} d_k^\top \nabla^2 f(x_k) d_k \\
	& \quad\quad\quad  + \frac{L_H}{6}\theta^{3j} \|d_k\|^3 \\
	&= -\theta^j \left(1-\frac{\theta^j}{2} \right) d_k^\top
	[\nabla^2 f(x_k)+ 2 \epsH I] d_k  -  \theta^{2j} \epsH  \|d_k \|^2 + 
	\theta^j r_k^\top d_k 	\\
	& \quad\quad\quad + \frac{L_H}{6}\theta^{3j} \|d_k\|^3 \\
	& \le  -\frac{\theta^j}{2} \epsH \|d_k \|^2 - \theta^{2j} \epsH \|d_k \|^2 
	+ \theta^j \| r_k \| \|d_k \| + \frac{L_H}{6}\theta^{3j} \|d_k\|^3 \\
	& \le -\frac{\theta^j}{2} \epsH \|d_k \|^2 +
	\theta^j \frac{\zeta}{2} \epsH \|d_k \|^2 
	+ \frac{L_H}{6}\theta^{3j} \|d_k\|^3 \\
	& = -\frac{\theta^j}{2} (1-\zeta)  \epsH \|d_k \|^2 
	+ \frac{L_H}{6}\theta^{3j} \|d_k\|^3.
\end{align*}
}
Thus, for any $j \geq 0$ for which sufficient decrease is not
obtained, one has
\refer{
\begin{equation} \label{eq:boundnotunitinexregnewtstep}
	\theta^{2j} \; \geq \; \frac{3}{L_H+\eta}(1-\zeta)\epsilon_H \|d_k\|^{-1}.
\end{equation}
In particular, setting $j=0$ in this expression, we obtain
\begin{equation} \label{eq:64A}
	\| d_k \| \ge \frac{3}{L_H+\eta} (1-\zeta) \epsH.
\end{equation}
} 
The right-hand side of \eqref{eq:boundnotunitinexregnewtstep} is bounded 
below, since
\begin{align}
	\nonumber
	\|d_k\| = \left\|[\nabla^2 f(x_k)+2\epsilon_H I]^{-1}
	\left(-g_k+r_k\right)\right\| 
	&\le \|[\nabla^2 f(x_k)+2\epsilon_H]^{-1}\| \left\|-g_k+r_k\right\| \\
	\nonumber
	&\le \frac{1}{\epsilon_H}\sqrt{\|g_k\|^2 + \|r_k\|^2} \\
	\nonumber
	&\le \frac{\sqrt{1+\zeta^2/4}}{\epsilon_H}\|g_k\| \\
	\label{eq:6rs}
	&\le \frac{\sqrt{1+\zeta^2/4}}{\epsilon_H}U_g,
\end{align}		
where we used again the orthogonality of $g_k$ and $r_k$ (from the
properties of conjugate gradient) as well as the
condition~\eqref{eq:approxregnewtonCG}.
For any $j > \jir$, we have
\refer{
\[
	\theta^{2j} < \theta^{2\jir} \le \frac{3(1-\zeta)}{L_H+\eta}
	\frac{\epsilon_H^2}{U_g\sqrt{1+\zeta^2/4}}  \le 
	\frac{3(1-\zeta)}{L_H+\eta}\epsilon_H \|d_k\|^{-1},
\]
} where the last inequality follows from \eqref{eq:6rs}.  Therefore,
\eqref{eq:boundnotunitinexregnewtstep} is violated for $j > \jir$,
which means that the line search must terminate with a step length
$\alpha_k = \theta^{j_k}$, with $1 \le j_k \le \jir+1$. The previous index 
$j=j_k-1$ satisfies \eqref{eq:boundnotunitinexregnewtstep}, so we have
\begin{equation*}
	\theta^{2j_k} \geq 
	\frac{3\theta^2}{L_H+\eta}(1-\zeta)\epsilon_H \|d_k\|^{-1},
\end{equation*}
so that 
\begin{align*}
	f(x_k) - f(x_k+\theta^{j_k}d_k) \; & \geq \; \frac{\eta}{6}\theta^{3 j_k}
	\|d_k\|^3 \\
	&  \geq \; \frac{\eta}{6}
	\left[\frac{3\theta^2}{L_H+\eta}(1-\zeta)\right]^{3/2}\epsilon_H^{3/2}
	\| d_k \|^{3/2} \\
	& \ge \frac{\eta}{6}
	\left[\frac{3\theta^2}{L_H+\eta}(1-\zeta)\right]^3\epsilon_H^3,
\end{align*}
where the final inequality follows from \eqref{eq:64A}, using the fact
that $\theta \in (0,1)$. Thus, condition\eqref{eq:inexactregnewtsteplength} 
also holds in the case of $\alpha_k<1$, and the proof is complete.
\end{proof}

\begin{theorem} \label{theo:wccitsinex}
Let Assumptions~\ref{assum:compactlevelset} and~\ref{assum:fC22}
hold. Then, Algorithm~\ref{algo:inexsorn} returns a point $x_k$
satisfying~\eqref{eq:eps2opt} in at most
\refer{
\begin{equation} \label{eq:wccitsinex}
	\hat{K} := \mathcal{\hat{C}}\max\left\{ \epsilon_g^{-3}\epsilon_H^{3},
	\epsilon_g^{-3/2},\epsilon_H^{-3}\right\}
\end{equation}
}
iterations, where
\begin{equation*}
	\mathcal{\hat{C}} := \frac{f(x_0)-\flow}{\hat{c}}, \quad
	\hat{c} := \min\left\{\frac{c_e}{8},c_g,\cin,\cir\right\},
\end{equation*}
with probability at least $1-\hat{K}\delta$. The constants $c_e$,
$c_g$, $c_{in}$, and $c_{ir}$ are defined in
Lemmas~\ref{lemma:eigsteplength}, \ref{lemma:gradsteplength}, 
\ref{lemma:inxactnewtsteplength}, and
\ref{lemma:inxactregnewtsteplength}, respectively.
\end{theorem}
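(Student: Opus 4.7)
The proof will closely parallel that of Theorem~\ref{theo:itwcc}, but with two essential modifications: first, the inexact Newton-type lemmas (Lemmas~\ref{lemma:inxactnewtsteplength} and~\ref{lemma:inxactregnewtsteplength}) must replace their exact counterparts when bounding decrease, and second, we must track the probability that the randomized Lanczos subroutine succeeds at every iteration. My plan is to first derive a per-iteration decrease bound conditional on the event that the current Lanczos call produced an eigenpair satisfying~\eqref{eq:inexeigenvec} (which has probability at least $1-\delta$), and then combine these bounds via telescoping, applying a union bound for the probability statement.

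For the per-iteration analysis, I would work through each row of Table~\ref{tab:stepsinexalgo}. Rows corresponding to Step~1 search directions (the scaled negative gradient and the rescaled $R_k g_k/\|g_k\|$ steps) are deterministic and the bounds of Lemmas~\ref{lemma:eigsteplength} and~\ref{lemma:gradsteplength} apply unchanged, producing the $c_e$ and $c_g$ contributions exactly as in Theorem~\ref{theo:itwcc}. For iterations using the negative-curvature direction $v^i_k$, I observe that $\|v^i_k\|=|\lambda^i_k| > \tfrac12\epsH$ together with $(v^i_k)^\top \nabla^2 f(x_k) v^i_k = \lambda^i_k \|v^i_k\|^2 = -\|v^i_k\|^3$ still satisfies the hypothesis \eqref{eq:dk3} of Lemma~\ref{lemma:eigsteplength}, hence
\[
f(x_k) - f(x_{k+1}) \ge c_e |\lambda^i_k|^3 \ge \tfrac{c_e}{8}\epsH^3,
\]
which accounts for the factor $c_e/8$ appearing in $\hat{c}$. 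For the inexact Newton and inexact regularized Newton directions, Lemmas~\ref{lemma:inxactnewtsteplength} and~\ref{lemma:inxactregnewtsteplength} yield decreases proportional to $\cin \min\{\|g_{k+1}\|^3\epsH^{-3},\epsH^3\}$ and $\cir \min\{\|g_{k+1}\|^3\epsH^{-3},\epsH^3\}$, respectively; since non-termination on these steps forces $\|g_{k+1}\| > \epsg$, both contributions can be lower bounded by $\min\{\cin,\cir\}\min\{\epsg^3\epsH^{-3},\epsH^3\}$.

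Taking the minimum over the cases and using the fact that each negative-curvature or Newton-type step relies on a Lanczos computation being accurate, I obtain, conditioned on success at iteration $k$, the unified bound
\[
f(x_k) - f(x_{k+1}) \ge \hat{c}\min\left\{\epsilon_g^3\epsilon_H^{-3}, \epsilon_g^{3/2},\epsilon_H^3\right\}.
\]
Summing this inequality from $k=0$ to $\hat K - 1$ and using $f(x_k) \ge \flow$ from \eqref{eq:boundscompact} yields exactly the iteration count \eqref{eq:wccitsinex}, by the same algebra as in Theorem~\ref{theo:itwcc}. Finally, the probability that \emph{every} Lanczos call performed up to iteration $\hat K$ produces an eigenpair satisfying~\eqref{eq:inexeigenvec} is, by a union bound, at least $1-\hat K\delta$; on this event the iteration bound holds, and on termination the combination of $\|g_k\|\le \epsg$ or $\|g_{k+1}\|\le \epsg$ with the accuracy $|\lambda^i_k - \lambda_{\min}(\nabla^2 f(x_k))| \le \tfrac12\epsH$ yields \eqref{eq:eps2opt}.

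The main subtlety in the argument is the bookkeeping of the probabilistic event: the decrease guarantee at a given iteration must be stated conditional on the corresponding Lanczos call being accurate, and a naive union bound is what produces the $\hat{K}\delta$ failure probability. Slightly more care is needed to recognize that iterations using only Step~1 directions do not invoke Lanczos at all, so the union bound can be tightened, but for the claimed statement the straightforward union bound over all $\hat K$ iterations is sufficient. The rest of the argument is a straightforward adaptation of the exact-case case analysis.
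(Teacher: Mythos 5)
Your proposal is correct and follows essentially the same route as the paper's proof: the same case analysis over the rows of Table~\ref{tab:stepsinexalgo}, the same $c_e/8$ accounting for the $|\lambda^i_k|>\tfrac12\epsH$ negative-curvature case, the same substitution of Lemmas~\ref{lemma:inxactnewtsteplength} and~\ref{lemma:inxactregnewtsteplength} for their exact counterparts, and the same telescoping plus union bound yielding the $1-\hat{K}\delta$ probability. Your remark that the union bound could be tightened to count only the iterations that actually invoke Lanczos is a valid observation not exploited in the paper, but it does not change the stated result.
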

\begin{proof}
For any iteration $l$ such that $x_l$ does not
satisfy~\eqref{eq:optwcc}, we must have that either $\min ( \| g_l \|,
\| g_{l+1} \|) > \epsg$ or $\lambdamin(\nabla^2 f(x_l)) < -\epsH$,
where the latter implies that $\lambda^i_l < -\tfrac12 \epsH$.  Thus,
similarly to the proof of Theorem~\ref{theo:itwcc}, we can consider
the following two cases.

\medskip

\textbf{Case 1: $\lambda^i_l < -\tfrac12 \epsH$.}

From Table~\ref{tab:stepsinexalgo}, we see that the same three choices
for $d_l$ as in the exact version are possible.  if $d_l =
\frac{R_l}{\|g_l\|}g_l$, we have exactly as in
Lemma~\ref{lemma:eigsteplength} that
\[
		f(x_l) - f(x_{l+1}) \; \ge \; c_e \epsH^3.
\]
When $d_l = -g_l/\|g_l\|^{1/2}$, we have from
Lemma~\ref{lemma:gradsteplength} that 
\begin{equation} \label{eq:roy.2}
		f(x_l) - f(x_{l+1}) \; \ge \; c_g \min\left\{
		\epsilon_g^{3}\epsilon_H^{-3},\epsilon_g^{3/2}\right\}.	
\end{equation}
The remaining case corresponds to the choice $d_l=v^i_l$. Since
\[
	\frac{d_l^\top \nabla^2 f(x_l) d_l}{\|d_l\|^2} = \lambda^i_l < 
	-\frac{\epsH}{2},
\]
with probability at least $1-\delta$ in that case, one can use the result of 
Lemma~\ref{lemma:eigsteplength} to deduce that
\[
	f(x_l) - f(x_{l+1})  \ge  c_e | \lambda^i_l|^3 \ge  \frac{c_e}{8}\epsH^3,
\]
again with probability at least $1-\delta$.

\medskip

\textbf{Case 2: $\lambda^i_l \ge -\tfrac12 \epsilon_H$, $\| g_l \| >
  \epsg$, and $\| g_{l+1} \| > \epsg$.} 
	
In this situation, we have three possible choices of search direction
$d_l$.  If $d_l = -g_l / \|g_l\|^{1/2}$, we have again from
Lemma~\ref{lemma:gradsteplength} that  \eqref{eq:roy.2} holds.
\refer{
If the inexact Newton direction is taken, we obtain by
Lemma~\ref{lemma:inxactnewtsteplength} that
\[
	f(x_k)-f(x_{k+1})  \; \ge \;
	\cin \min\left\{\epsg^3 \epsH^{-3},\epsH^3\right\}.
\]
}
Finally, if the search direction is the inexact regularized Newton
direction, that is, $d_l = d^{ir}_l$, we have from
Lemma~\ref{lemma:inxactregnewtsteplength} that
\refer{
\[
	f(x_k)-f(x_{k+1}) \; \ge \; \cir \min\left\{\epsg^3 \epsH^{-3},
	\epsH^3\right\}.
\]
}
By putting all these bounds together, as in the proof of
Theorem~\ref{theo:itwcc}, we obtain that the number of iterations
before reaching a point satisfying~\eqref{eq:optwcc} is bounded above
by $\hat{K}$ defined in the statement of the theorem.

Recalling that for each of these iterations, there is a probability
$\delta$ that the randomized Lanczos iteration in
\eqref{eq:inexeigenvec} will fail, we bound the probability of failure
during the course of the algorithm by $\hat{K} \delta$.
\end{proof}

Note that if $\delta$ is chosen large enough such that $1-\hat{K}
\delta<0$, Theorem~\ref{theo:wccitsinex} is not informative. The same
remark holds for the corollary below, that makes use of the results
from Sections~\ref{subsec:inexacteig} and~\ref{subsec:inexactnewt} to
obtain a bound on the total number of Hessian-vector multiplications
and gradient evaluations needed by the procedure (assuming that these
operations cost roughly the same).
\begin{corollary} \label{coro:computingtimeinex}
Suppose the assumptions of Theorem~\ref{theo:wccitsinex} hold, and let
$\delta \in (0,1)$ be given. Then the total number of gradient
evaluations and Hessian-vector multiplications required by
Algorithm~\ref{algo:inexsorn} to reach an iterate
satisfying~\eqref{eq:optwcc} is satisfied is bounded by
\begin{equation} \label{eq:computingtimeinex}
	\begin{array}{l}
	\left[2+\min\left\{n,\tfrac{1}{\sqrt{2}}(U_H+2)^{1/2}\epsilon_H^{-1/2}
	\ln\left(\frac{4(U_H+2)^{3/2}\epsilon_H^{-3/2}}
	{\zeta}\right)\right\} + \right.\\
	 \\
	\quad\quad\quad\quad\quad \left.\min\left\{n,
	(U_H+2)^{1/2}\epsilon_H^{-1/2}\frac{\ln(n/\delta^2)}{2}
	\right\}\right] \times \hat{K},
	\end{array}
\end{equation}
with probability $1-\hat{K} \delta$. 
\end{corollary}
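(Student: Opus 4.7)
The plan is to derive the per-iteration operation count and then multiply by the outer-iteration bound $\hat{K}$ supplied by Theorem \ref{theo:wccitsinex}, handling the probability through a union bound. I would decompose the accounting into three distinct contributions matching the three summands in the brackets of \eqref{eq:computingtimeinex}.

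First, Step~1 of Algorithm~\ref{algo:inexsorn} always costs one gradient evaluation (to form $g_k$) and one Hessian--vector multiplication (to form the product $\nabla^2 f(x_k)g_k$ appearing in $R_k$), while the backtracking line search only spends function evaluations, which are not counted here. This accounts for the constant "$2$" in the bound.

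Second, when Step~2 is entered, the randomized Lanczos method of Lemma~\ref{lemma:Lanczosfixedproba} is applied to $\nabla^2 f(x_k)$, whose norm is bounded by $U_H$, with target accuracy $\varepsilon=\epsilon_H/2$ to produce an eigenpair satisfying \eqref{eq:inexeigenvec}. Using the uniform norm bound $M=U_H+2\ge U_H$ (chosen to match the CG analysis below), Lemma~\ref{lemma:Lanczosfixedproba} guarantees success with probability at least $1-\delta$ in at most
\[
\min\!\left\{n,\;\tfrac{1}{2}(U_H+2)^{1/2}\epsilon_H^{-1/2}\ln(n/\delta^2)\right\}
\]
iterations, each costing one Hessian--vector product; this matches the third summand.

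Third, whenever the algorithm computes $d^{in}_k$ or $d^{ir}_k$, I invoke Lemma~\ref{lemma:itsCGapprox}. For the inexact Newton case, $\nabla^2 f(x_k)$ satisfies $\epsilon_H I\preceq \nabla^2 f(x_k)\preceq U_H I$ (thanks to $\lambda_k^i>3\epsilon_H/2$ and \eqref{eq:inexeigenvec}). For the inexact regularized Newton case, $\nabla^2 f(x_k)+2\epsilon_H I$ satisfies $\epsilon_H I\preceq\nabla^2 f(x_k)+2\epsilon_H I\preceq (U_H+2\epsilon_H)I$. Using $\epsilon_H\le 1$, in either case the condition number is bounded by $\kappa\le(U_H+2)/\epsilon_H$, and Lemma~\ref{lemma:itsCGapprox} bounds the CG iteration count by the corresponding second summand, each iteration costing one Hessian--vector product. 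Summing the three contributions and multiplying by $\hat{K}$ gives \eqref{eq:computingtimeinex}.

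The main subtlety will be the probability accounting. Each outer iteration that reaches Step~2 triggers one randomized Lanczos call with failure probability at most $\delta$; applying a union bound over at most $\hat{K}$ iterations yields a total failure probability of at most $\hat{K}\delta$. The key observation, which lets me avoid any further probability degradation, is that the iteration bound $\hat{K}$ in Theorem~\ref{theo:wccitsinex} is itself conditioned on the same "all Lanczos calls succeed" event, so the event $1-\hat{K}\delta$ simultaneously validates the outer iteration count, the per-iteration step classification, and the inner Lanczos counts. Apart from this bookkeeping, the argument is a straightforward aggregation of the earlier lemmas, with the choice of the uniform constant $U_H+2$ serving only to give a clean common bound for both the Lanczos and CG estimates.
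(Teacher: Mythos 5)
Your proposal is correct and takes essentially the same route as the paper's (one-sentence) proof: apply Lemma~\ref{lemma:Lanczosfixedproba} with $M=U_H+2$ and $\varepsilon=\epsilon_H/2$ and Lemma~\ref{lemma:itsCGapprox} with $\kappa\le(U_H+2)/\epsilon_H$, add the fixed per-iteration cost of one gradient and one Hessian--vector product, multiply by $\hat K$, and union-bound the Lanczos failure probability over the $\hat K$ iterations. Your write-up simply makes explicit the bookkeeping that the paper leaves implicit.
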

\begin{proof}
The proof follows directly from Lemmas~\ref{lemma:Lanczosfixedproba}
and \ref{lemma:itsCGapprox}, setting $M=U_H+2$ and
$\varepsilon=\epsH/2$, noting that for both Newton and regularized
Newton steps, the condition number of the respective coefficient
matrices can be bounded by $(U_H+2)/\epsH$.
\end{proof}

As in Section~\ref{subsec:wccexactits}, we can particularize this
result to a specific choice of tolerances.
\begin{corollary} \label{coro:computingtimespecific}
Suppose that the assumptions of Theorem~\ref{theo:wccitsinex} hold,
and let $\delta \in (0,1)$ be given. Define $\epsg=\eps$ and $\epsH =
\sqrt{\eps}$, for some $\eps \in (0,1)$. Then the number of gradient
evaluations and Hessian-vector products needed to
Algorithm~\ref{algo:inexsorn} to satisfy \eqref{eq:optwcc} is bounded by 
\refer{ 
\begin{equation} \label{eq:computingtime7over4}
	\begin{array}{l}
	\left[2+\min\left\{n,\tfrac{1}{\sqrt{2}}(U_H+2)^{1/2}\epsilon^{-1/4}
	\ln\left(\frac{4(U_H+2)^{3/2}\epsilon^{-3/4}}
	{\zeta}\right)\right\} + \right.\\
	\\
	\quad\quad\quad\quad\quad \left.\min\left\{n,
	(U_H+2)^{1/2}\epsilon^{-1/4}\frac{\ln(n/\delta^2)}{2}
	\right\}\right] \times \hat{C} \epsilon^{-3/2},
	\end{array}
\end{equation}
where $\hat{C}$ is defined as in Theorem~\ref{theo:wccitsinex},
with probability at least $1-\hat{C} \eps^{-3/2} \delta$.
}
\end{corollary}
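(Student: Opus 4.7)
The plan is to obtain Corollary~\ref{coro:computingtimespecific} as a direct specialization of Corollary~\ref{coro:computingtimeinex} under the tolerance choices $\epsg = \eps$ and $\epsH = \sqrt{\eps}$. First I would substitute $\epsH = \sqrt{\eps}$ into every explicit occurrence of $\epsH$ in the bound~\eqref{eq:computingtimeinex}: the factors $\epsilon_H^{-1/2}$ become $\eps^{-1/4}$, the factor $\epsilon_H^{-3/2}$ becomes $\eps^{-3/4}$, and the two inner $\min$ terms are transformed accordingly. This is pure bookkeeping and requires no additional argument.

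Next I would evaluate $\hat{K}$ from Theorem~\ref{theo:wccitsinex} at these tolerances. Under $\epsg = \eps$ and $\epsH = \sqrt{\eps}$, the three quantities inside the outer $\max$ collapse to the same value, since
\[
\epsilon_g^{-3}\epsilon_H^{3} \;=\; \epsilon_g^{-3/2} \;=\; \epsilon_H^{-3} \;=\; \eps^{-3/2},
\]
so $\hat{K} = \hat{\mathcal{C}}\,\eps^{-3/2}$ with $\hat{\mathcal{C}}$ as defined in Theorem~\ref{theo:wccitsinex}. Multiplying the per-iteration cost above by this iteration count produces exactly the bound~\eqref{eq:computingtime7over4}.

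Finally, the probability statement transfers verbatim from Theorem~\ref{theo:wccitsinex}: the randomized Lanczos step can fail with probability at most $\delta$ per iteration, and a union bound over at most $\hat{K} = \hat{\mathcal{C}}\,\eps^{-3/2}$ iterations gives total failure probability at most $\hat{\mathcal{C}}\,\eps^{-3/2}\,\delta$, so the bound \eqref{eq:computingtime7over4} holds with probability at least $1 - \hat{\mathcal{C}}\,\eps^{-3/2}\,\delta$. There is no genuine obstacle here; the only thing to watch is the consistent substitution of $\epsH = \sqrt{\eps}$ inside the logarithmic factor, where $(U_H+2)^{3/2}\epsilon_H^{-3/2}$ indeed becomes $(U_H+2)^{3/2}\eps^{-3/4}$ as displayed.
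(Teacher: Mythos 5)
Your proposal is correct and is exactly the argument the paper intends (the paper states this corollary without proof as a direct specialization of Corollary~\ref{coro:computingtimeinex} and Theorem~\ref{theo:wccitsinex}): substituting $\epsH=\sqrt{\eps}$ into the per-iteration cost, observing that all three terms in the $\max$ defining $\hat{K}$ collapse to $\eps^{-3/2}$, and carrying over the union-bound failure probability $\hat{K}\delta$. No gaps.
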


This result is meaningful when $\delta \ll \eps^{3/2}$. In terms of
the complexity bound, such a choice is not prohibitively small,
because $\delta$ enters into the bound \eqref{eq:computingtime7over4} only 
inside a $\log$ term.

We can obtain a bound for the case of $\delta=0$ (that is, 
almost certainty), at the cost of taking $n$ Lanczos
iterations whenever the smallest eigenvalue is needed (see
Lemma~\ref{lemma:Lanczosfixedproba}). In this case, the
bound~\eqref{eq:computingtime7over4} either becomes
$\mathcal{O}\left(\left(n+\ln\left(\epsilon^{-1}\right)\right)
\epsilon^{-7/4}\right)$ or $\mathcal{O}(n \epsilon^{-3/2})$, depending
on which term dominates in the quantity corresponding to conjugate
gradient iterations.

For very large $n$ and $\delta>0$, we can consider that the term
involving $\epsilon$ is smaller than $n$ in both minimum expressions
in Corollary~\ref{coro:computingtimespecific}. In this
case, the bound is
\begin{equation*}
	\mathcal{O}\left( 
	\ln\left(\frac{1}{\min\{\epsilon,\delta/\sqrt{n}\}}\right)
	\epsilon^{-7/4}\right).
\end{equation*}
This complexity matches other recent findings
~\cite{NAgarwal_ZAllenZhu_BBullins_EHazan_TMa_2017,
YCarmon_JCDuchi_OHinder_ASidford_2017}.

\refer{In terms of dependencies with respect to problem constants, we 
can reproduce the analysis from Section~\ref{subsec:wccexactits}, replacing 
$c_n$ and $c_r$ by $c_{in} = \mathcal{O}(L_H^{-3})$ and 
$c_{ir} = \mathcal{O}(L_H^{-3})$, respectively. For instance, the bound 
from the previous paragraph is in
\begin{equation*}
	\mathcal{O}\left( (f(x_0)-\flow) U_H^{1/2}L_H^3
	\ln\left(\frac{U_H}{\min\{\epsilon,\delta/\sqrt{n}\}}\right)
	\epsilon^{-7/4}\right).
\end{equation*}
We point out that the dependency on $L_H$ of our bound is worse than
those of~\cite{NAgarwal_ZAllenZhu_BBullins_EHazan_TMa_2017,
YCarmon_JCDuchi_OHinder_ASidford_2017}, due to the lack of explicit
use of this constant within our algorithm. Still, we believe our
dependency to match that of other Newton-type methods (although those are not 
enlightened in the related literature), and we consider such schemes as being 
more amenable to highly nonlinear settings where estimating such a constant 
would likely be impractical.}

As a final note, we observe that one could also include the number of 
line-search iterations into our complexity bound. However, this cost is 
essentially logarithmic in $1/{\epsilon}$, therefore it is dominated by 
the cost of the linear algebra techniques.

\section{Discussion}
\label{sec:discuss}

Among the many algorithmic frameworks that have been proposed for
smooth nonconvex optimization with second-order complexity guarantees,
it can be difficult to determine the algorithmic features that affect
the complexity analysis, or to understand how the guarantees provided 
by different algorithms relate to one another. We have presented a 
second-order complexity analysis of a framework that is based exclusively on 
line searches along certain directions. It does not require solution of
cubic-regularized or trust-region subproblems, or minimization of
convexified functions --- operations that are needed by other
approaches.  Our search directions are of several types --- gradient,
negative-curvature, Newton, and regularized Newton --- and we presented
a variant of our method that allows inexact direction computation
using iterative methods. We believe that ours is the first approach of
line-search type to achieve known optimal complexity, among methods
that identify points that satisfy approximate second-order necessary
conditions.

\refer{ In addition to the results of this paper, we observe that it
  is possible to modify our algorithms to attain points that satisfy
  termination conditions of the form~\eqref{eq:eps2opt} (rather
  than~\eqref{eq:optwcc}) by continuing to iterate in the situation in
  which $\|g_{k+1}\| \le \epsg$ but $\lambda_{min}(\nabla^2
  f(x_{k+1}))< -\epsH$. Step $k+1$ then yields a decrease that is a
  multiple of $\epsH^3$ (per Lemma~\ref{lemma:eigsteplength}), so the
  overall complexity estimates are preserved, even if step $k$ in this
  situation fails to produce a significant decrease in $f$.  }

In designing the framework of Algorithms~\ref{algo:sorn}
and~\ref{algo:inexsorn}, we have made some choices to give preference
to one direction choice over another, and we have also incorporated
several types of steps. Given the recent literature in this area, 
our proposed scheme is actually one particular instance of a broader
class of methods with similar complexity guarantees but possibly
diverse practical performance. An implementation of our approach would
raise several delicate issues, for example, issues associated with
failure of the randomized Lanczos procedure for obtaining an estimate
of the smallest eigenvalue. An incorrect estimate here could lead to
the conjugate gradient method subsequently being applied to an
indefinite matrix; a robust implementation would need to detect and
recover from such an occurrence.  Additionally, the choice of suitable
values for the bound on the Hessian norm is likely to be of critical
importance. Addressing these concerns in the aim of developing a
practical algorithm with good complexity guarantees is the subject of
ongoing research.

\section*{Acknowledgments} 
We are grateful to the anonymous referees and associate editor of the
original version of the paper, whose construtive comments led to
numerous improvements.

\appendix

\section{Technical Result} \label{app:A}

\refer{ We prove a technical result that is used in several
  proofs, including that of Lemma~\ref{lemma:regnewtsteplength}.
\begin{lemma} \label{lem:T1}
For positive scalars $a$ and $b$, and $t \ge 0$, we have
\[
-a + \sqrt{a^2+b t} \ge (-a + \sqrt{a^2+b}) \min(t,1).
\]
\end{lemma}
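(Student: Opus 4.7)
The plan is to treat the function $\varphi(t) := -a + \sqrt{a^2 + bt}$ as a scalar function on $[0,\infty)$ and to exploit two elementary facts: $\varphi$ is non-decreasing and concave (since its derivative $\varphi'(t) = b/(2\sqrt{a^2+bt})$ is positive and decreasing for $b>0$), and $\varphi(0)=0$. These two observations handle the two cases of the $\min$ on the right-hand side.

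First I would dispense with the easy case $t \ge 1$. Here $\min(t,1)=1$, and the claim reduces to $\varphi(t) \ge \varphi(1)$, which is immediate from monotonicity. Next, for $t \in [0,1]$, I would use concavity: writing $t = t \cdot 1 + (1-t) \cdot 0$ and applying Jensen's inequality to the concave $\varphi$, we get
\[
\varphi(t) \;\ge\; t\,\varphi(1) + (1-t)\,\varphi(0) \;=\; t\,\varphi(1),
\]
using $\varphi(0)=0$ at the last step. Since $\min(t,1)=t$ in this range and $\varphi(1) = -a+\sqrt{a^2+b} \ge 0$, this is exactly the desired bound.

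There is no real obstacle here; the only thing to verify carefully is concavity of $\varphi$, but this follows from $\varphi''(t) = -b^2/(4(a^2+bt)^{3/2}) \le 0$ on $[0,\infty)$. Combining the two cases yields the lemma.
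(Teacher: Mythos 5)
Your proof is correct. The paper handles the case $t\ge 1$ the same way you do (monotonicity), but for $t\in(0,1)$ it proceeds purely algebraically: it writes down the inequality $-a+\sqrt{a^2+bt}\ge(-a+\sqrt{a^2+b})\,t$ and reduces it through a chain of equivalences (moving $a$ to the right, squaring, cancelling, and recognizing $b+2a^2-2a\sqrt{a^2+b}=(-a+\sqrt{a^2+b})^2$) to the statement $1\ge t$. Your route instead observes that $\varphi(t)=-a+\sqrt{a^2+bt}$ is concave with $\varphi(0)=0$, so $\varphi(t)\ge t\,\varphi(1)$ on $[0,1]$ by Jensen (equivalently, superadditivity of a concave function vanishing at the origin). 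Your argument is shorter and more conceptual, and it generalizes immediately to any nonnegative concave function with $\varphi(0)=0$; the paper's is entirely elementary, requiring no differentiation, at the cost of a somewhat opaque squaring-and-cancelling computation. Both are complete; the only details worth making explicit in your version are that $\varphi(0)=\sqrt{a^2}-a=0$ uses $a>0$, and that $\varphi(1)\ge 0$ (which you do note) is what makes the final inequality go the right way.
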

\begin{proof}
For the case of $t \ge 1$, we have
\[
	-a + \sqrt{a^2+b t} \ge -a + \sqrt{a^2+b}  = (-a + \sqrt{a^2+b}) \min(t,1),
\]
so the result holds in this case.  For $t \in (0,1)$, we need to show
that
\[
	-a + \sqrt{a^2+b t} \ge (-a + \sqrt{a^2+b}) t.
\]
This claim follows from the following chain of equivalences:
\begin{alignat*}{2}
&& -a + \sqrt{a^2+bt} &\ge (-a + \sqrt{a^2+b}) t \\
& \Leftrightarrow & \sqrt{a^2+bt} &\ge (-a + \sqrt{a^2+b}) t +a \\
& \Leftrightarrow &  a^2+bt & \ge  (-a + \sqrt{a^2+b})^2 t^2 + 2a  (-a + 
\sqrt{a^2+b})t + a^2 \\
& \Leftrightarrow & (b + 2a^2- 2a \sqrt{a^2+b})t & \ge
(-a + \sqrt{a^2+b})^2 t^2  \\
& \Leftrightarrow &  (-a + \sqrt{a^2+b})^2 t & \ge (-a + \sqrt{a^2+b})^2 t^2 \\
& \Leftrightarrow & 1 & \ge t,
\end{alignat*}
completing the proof.
\end{proof}
}

\bibliographystyle{siam}
\bibliography{refs3over2}

\end{document}